\title{A pLaTeX example}
\setlist[enumerate,1]{label={\upshape(\arabic*)}}
\setlist[enumerate,2]{label={\upshape(\alph*)}}
\numberwithin{table}{section}
\tikzset{blackv/.style={circle,fill=black,inner sep=3pt,outer sep=3pt},
         whitev/.style={circle,fill=white,draw=black,inner sep=3pt,outer sep=3pt},
         blabel/.style={circle,draw=black,inner sep=1.5pt,outer sep=0pt},
         redv/.style={circle,fill=red,inner sep=3pt,outer sep=3pt},
         bluev/.style={circle,fill=blue,inner sep=3pt,outer sep=3pt},
         block/.style={draw,rectangle split,rectangle split horizontal,rectangle split parts=#1},
         symbol/.style={
           draw=none,
           every to/.append style={
             edge node={node [sloped, allow upside down, auto=false]{$#1$}}}}
}
\newcolumntype{C}{>{$}c<{$}}
\newcolumntype{x}[1]{>{\centering\arraybackslash\hspace{0pt}}m{#1}}
\newtheorem{theorem}{Theorem}[section]
\newtheorem{corollary}[theorem]{Corollary}
\newtheorem{lemma}[theorem]{Lemma}
\newtheorem*{lemma*}{Lemma}
\newtheorem*{theorem*}{Theorem}
\newtheorem{proposition}[theorem]{Proposition}
\newtheorem{definition-proposition}[theorem]{Definition-Proposition}
\theoremstyle{definition}
\newtheorem{definition}[theorem]{Definition}
\newtheorem{remark}[theorem]{Remark}
\newtheorem{example}[theorem]{Example}
\newtheorem*{ack}{Acknowledgments}
\renewcommand{\AA}{\mathcal{A}}
\newcommand{\CC}{\mathcal{C}}
\newcommand{\DD}{\mathcal{D}}
\newcommand{\HH}{\mathcal{H}}
\renewcommand{\SS}{\mathcal{S}}
\newcommand{\TT}{\mathcal{T}}
\newcommand{\TTT}{\mathsf{T}}
\newcommand{\UU}{\mathcal{U}}
\newcommand{\WW}{\mathcal{W}}
\newcommand{\XX}{\mathcal{X}}
\newcommand{\Z}{\mathbb{Z}}
\newcommand{\Ext}{\operatorname{Ext}\nolimits}
\newcommand{\Hom}{\operatorname{Hom}\nolimits}
\newcommand{\End}{\operatorname{End}\nolimits}
\newcommand{\RHom}{\mathbf{R}\strut\kern-.2em\operatorname{Hom}\nolimits}
\newcommand{\Kernel}{\operatorname{Ker}\nolimits}
\newcommand{\Cokernel}{\operatorname{Coker}\nolimits}
\newcommand{\ann}{\operatorname{ann}\nolimits}
\newcommand{\coker}{\Cokernel}
\renewcommand{\ker}{\Kernel}
\newcommand{\se}{\subseteq}
\DeclareMathOperator{\moduleCategory}{\mathsf{mod}} \renewcommand{\mod}{\moduleCategory}
\DeclareMathOperator{\ind}{\mathsf{ind}}
\DeclareMathOperator{\simp}{\mathsf{sim}}
\DeclareMathOperator{\ftors}{\mathsf{f-tors}}
\DeclareMathOperator{\sbrick}{\mathsf{sbrick}}
\DeclareMathOperator{\wide}{\mathsf{wide}}
\DeclareMathOperator{\tors}{\mathsf{tors}}
\DeclareMathOperator{\sttilt}{\mathsf{s}\tau\mathsf{-tilt}}
\DeclareMathOperator{\Indec}{\mathsf{Indec}}
\DeclareMathOperator{\Fac}{\mathsf{Fac}}
\DeclareMathOperator{\Filt}{\mathsf{Filt}}
\DeclareMathOperator{\thick}{\mathsf{thick}}
\DeclareMathOperator{\add}{\mathsf{add}}
\DeclareMathOperator{\id}{\mathsf{id}}
\DeclareMathOperator{\Ind}{\mathsf{Ind}}
\DeclareMathOperator{\Res}{\mathsf{Res}}
\newcommand{\iso}{\cong}
\numberwithin{equation}{section}
\begin{document}
\title[Clifford's theorem for bricks]{Clifford's theorem for bricks}

\author[Y. Kozakai]{Yuta Kozakai}
\address{Y. Kozakai: Department of Mathematics, Tokyo University of Science 1-3, Kagurazaka, Shinjuku-ku, Tokyo, 162-8601, Japan}
\email{kozakai@rs.tus.ac.jp}

\author[A. Sakai]{Arashi Sakai}
\address{A. Sakai: Graduate School of Mathematics, Nagoya University, Chikusa-ku, Nagoya, 464-8602, Japan}
\email{m20019b@math.nagoya-u.ac.jp}

\subjclass[2020]{20C20, 16G10, 18E10}
\keywords{Clifford's theorem, bricks, semibricks, simple-minded collections, wide subcategories}

\begin{abstract}
Let $G$ be a finite group, $N$ a normal subgroup of $G$, and $k$ a field of characteristic $p>0$.
In this paper, we formulate the brick version of Clifford's theorem under suitable assumptions and prove it by using the theory of wide subcategories.
As an application of our theorem, we consider the restrictions of semibricks and two-term simple-minded collections under the assumption that the index of the normal subgroup $N$ in $G$ is a $p$-power.
\end{abstract}

\maketitle
\tableofcontents

\section{Introduction}\label{sec:intro}

One of the most fundamental theories in the representation theory of finite groups is Clifford theory introduced in   \cite{C}. The theory describes the relation between representations of a finite group $G$ and those of a normal subgroup $N$ of $G$.
One of the most important and famous theorems in this theory is the so-called Clifford's theorem, which explains the relation of simple modules over $kG$ and those over $kN$, where $k$ denotes a field.

\begin{theorem}[Clifford's theorem]
Let $G$ be a finite group, $N$ a normal subgroup of $G$ and $k$ a field. For any simple $kG$-module $S$, its restriction $\Res S$ to $N$ is a semisimple $kN$-module. Moreover, if we decompose $\Res S=T_1^{a_1}\oplus\cdots\oplus T_n^{a_n}$, where $T_1, \ldots, T_n$ are pairwise non-isomorphic simple modules, then the following statements hold.
\begin{enumerate}
    \item $G$ permutates \{$T_i^{a_i}\mid i=1,\ldots,n\}$ transitively.
    \item $\dim_kT_1=\cdots=\dim_kT_n$ and $a_1=\cdots=a_n$.
\end{enumerate}
\end{theorem}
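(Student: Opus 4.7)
The plan is to exploit the $G$-action inside the simple $kG$-module $S$ together with the normality of $N$. The central observation is that for any $g\in G$ and any $kN$-submodule $V\se\Res S$, the translate $gV$ is again a $kN$-submodule: for $n\in N$ we have $n(gv)=g(g^{-1}ng)v\in gV$ thanks to $g^{-1}Ng=N$. As a $kN$-module, $gV$ is isomorphic to the conjugate module obtained from $V$ by twisting the $N$-action along $n\mapsto g^{-1}ng$; in particular $gV$ is simple whenever $V$ is, and all simple $kN$-summands of $gV$ share a single isomorphism class whenever those of $V$ do.

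First I would prove semisimplicity of $\Res S$. Since $kG$ is finite-dimensional as a $k$-algebra, $S$ is finite-dimensional, so it contains some simple $kN$-submodule $T$. The sum $\sum_{g\in G}gT$ is stable under $G$ by construction and under $N$ by the observation above, hence is a $kG$-submodule of $S$; it is nonzero, so the simplicity of $S$ forces it to equal $S$. Each summand $gT$ is a simple $kN$-module, so $\Res S$ is a sum of simples and therefore semisimple.

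Next I would establish (1). Write $\Res S=M_1\oplus\cdots\oplus M_n$ with $M_i=T_i^{a_i}$ the $T_i$-isotypic component. For each $g\in G$, the simple $kN$-constituents of $gM_i$ all share an isomorphism class (the conjugate of $T_i$), so $gM_i\se M_{\sigma(i)}$ for some index $\sigma(i)$; applying the same reasoning to $g^{-1}$ gives the reverse containment, hence $gM_i=M_{\sigma(i)}$. Thus $G$ permutes $\{M_1,\dots,M_n\}$. For any $G$-orbit $\mathcal O$, the sum $\bigoplus_{M_i\in\mathcal O}M_i$ is $kG$-stable and nonzero, so by simplicity of $S$ it equals $S$, proving that $G$ acts transitively on $\{M_1,\dots,M_n\}$.

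For (2), transitivity yields $g_i\in G$ with $g_iM_1=M_i$, giving $k$-linear isomorphisms $T_1^{a_1}\iso T_i^{a_i}$ that send simple $kN$-constituents bijectively to simple $kN$-constituents; counting constituents and comparing $k$-dimensions yields $a_1=a_i$ and $\dim_kT_1=\dim_kT_i$. No step in this argument is genuinely hard; the only point requiring care is the verification that $gV$ is a $kN$-submodule whose $N$-action is isomorphic to the appropriate conjugate of $V$, which is precisely where the normality hypothesis on $N$ is used.
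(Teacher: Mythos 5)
Your proof is correct: it is the classical argument for Clifford's theorem, working directly inside $S$ with $kN$-submodules and their $G$-translates, and every step (the normality computation $n(gv)=g(g^{-1}ng)v$, the fact that $\sum_{g\in G}gT$ is a nonzero $kG$-submodule and hence all of $S$, the permutation of isotypic components, transitivity via simplicity, and the length/dimension comparison) is sound. The paper, however, does not prove the statement this way: it records the classical theorem as known and recovers it as the special case $\WW=\mod kG$ of its Theorem 3.8, whose proof runs through categorical machinery — the counit $\Ind T\to S$ of the adjunction $\Ind\dashv\Res$ is epic by simplicity, Mackey's formula gives $\Res\Ind T\iso\bigoplus_g gT$, and an epimorphism from a semisimple object in the $G$-invariant wide subcategory $\Ind^{-1}(\WW)$ splits, so $\Res S$ is a direct summand of $\bigoplus_g gT$; the uniqueness of isotypic subobjects then yields (1) and (2). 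The two routes encode the same idea (your $S=\sum_g gT$ is exactly the surjectivity of the counit combined with Mackey), but they buy different things: your argument is elementary and self-contained, while the paper's formulation is forced by its greater generality — for a brick $S$ that is only simple in some wide subcategory $\WW$, one cannot take an arbitrary simple $kN$-submodule of $\Res S$ nor conclude that a nonzero $kG$-submodule equals $S$, so the simplicity has to be exploited only through morphisms and subobjects living in $\WW$ and $\Ind^{-1}(\WW)$. In particular, your key step using arbitrary $kN$-submodules of $\Res S$ is precisely the step that does not survive the generalization, which is why the paper phrases everything via $\Ind^{-1}(\WW)$, but for the classical statement as posed your proof is complete and valid (and works in any characteristic, as the statement requires).
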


Since this theorem was released, this has been used in many studies on the representation theory of finite groups.
In particular, this theorem has been generalized in a variety of contexts.

One the other hand, there are generalized concepts of simple modules and semisimple modules.
For a finite dimensional algebra $\Lambda$ over a field, a $\Lambda$-module $M$ is called a {\it brick} if its endomorphism ring is a division algebra, and called a {\it semibrick} if it is a direct sum of bricks and if there are no nonzero homomorphisms between any two pairwise non-isomorphic indecomposable summands of it. 
These notions are generalizations of simple modules and semisimple modules respectively, nowadays they are characterized in various ways (\cite{As, DIJ, DIRRT, Rin}).
To investigate bricks and semibricks is expected to develop representation theory for a variety of reasons. In fact, semibricks correspond bijectively with wide subcategories of $\mod \Lambda$ (\cite{Rin}).
Moreover, then semibricks correspond bijectively with support $\tau$-tilting modules, two-term silting complexes, torsion classes, two-term simple minded-collections and more under the assumption that there exist only finitely many isomorphism classes of support $\tau$-tilting modules over $\Lambda$ (\cite{AIR, As, BY, KY}).
Also, there exist only finitely many isomorphism classes of support $\tau$-tilting modules over $\Lambda$ if and only if so do bricks over $\Lambda$ (\cite{DIJ}).
There are many other important reasons to consider bricks and semibricks, hence the studies on bricks and semibricks make sense and should be done.
That's why, given a brick version of Clifford's theorem, the representation theory would develop but the restrictions of bricks are not semibricks in general unfortunately (see Example \ref{example:a4ands4}). However, we find that the brick version of Clifford's theorem holds under the appropriate assumptions using the language of wide subcategories of $\mod kG$.
Here we say that a wide subcategory $\WW$ of $\mod kG$ is {\it stable under $k[G/N]\otimes_k-$} if we have that $k[G/N]\otimes_k W \in \WW$ for any object $W$ in $\WW$.

\begin{theorem}[see Theorem \ref{thm:main}]\label{main-thm1}
Let $G$ be a finite group, $N$ a normal subgroup of $G$ and $k$ a field of characteristic $p>0$. Then, for any brick $S$ appearing as a simple object in a wide subcategory $\WW$ of $\mod kG$ stable under $k[G/N]\otimes_k-$, the following hold.
\begin{enumerate}
    \item $\Res S$ is a semibrick, here $\Res$ means the restriction functor from $\mod kG$ to $\mod kN$.
    \item If we decompose $\Res S=T_1^{a_1}\oplus\cdots\oplus T_n^{a_n}$, where $T_1, \ldots, T_n$ are pairwise non-isomorphic indecomposable modules, then the following statements hold.
\begin{enumerate}
    \item $G$ permutates \{$T_i^{a_i}\mid i=1,\ldots,n\}$ transitively.
    \item $\dim_kT_1=\cdots=\dim_kT_n$ and $a_1=\cdots=a_n$.
\end{enumerate}
\end{enumerate}
\end{theorem}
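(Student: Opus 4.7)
The plan is to use the adjunction $(\Ind,\Res)$ and Frobenius reciprocity to reduce the problem to a calculation inside the wide subcategory $\WW$. The starting observation is the natural $kG$-module isomorphism $\Ind\Res M\cong k[G/N]\otimes_k M$ sending $g\otimes m$ to $gN\otimes gm$; under this identification, the stability hypothesis on $\WW$ reads $\Ind\Res\WW\subseteq\WW$. Since $[G:N]$ is finite, both $\Ind$ and $\Res$ are exact, and $(\Ind,\Res)$ is a Frobenius pair.

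The first step is to produce a companion wide subcategory on the $N$-side. A natural candidate is
\[
\WW':=\{X\in\mod kN\mid\Ind X\in\WW\}.
\]
Exactness of $\Ind$ together with the wideness of $\WW$ give at once that $\WW'$ is wide in $\mod kN$, and the stability hypothesis $\Ind\Res\WW\subseteq\WW$ yields $\Res\WW\subseteq\WW'$; in particular $\Res S\in\WW'$.

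For part (1), I would show that $\Res S$ is actually a semisimple object of $\WW'$, equivalently that $\End_{kN}(\Res S)$ is semisimple artinian. By Frobenius reciprocity,
\[
\End_{kN}(\Res S)\cong\Hom_{kG}(S,\Ind\Res S)\cong\Hom_{kG}(S,k[G/N]\otimes_k S),
\]
computed inside the length category $\WW$ in which $S$ is simple with division ring $D=\End_{kG}(S)$. The idea is to analyse the composition series of $k[G/N]\otimes_k S$ in $\WW$: every composition factor admitting a nonzero map from $S$ must be $S$ by Schur's lemma in $\WW$, and conversely each indecomposable summand of $\Res S$ picks out such a constituent. Exploiting additionally the action of $k[G/N]$ on $k[G/N]\otimes_k S$, one organises these constituents so that the resulting endomorphism algebra is a direct product of matrix algebras over division rings lying over $D$, hence semisimple. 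This yields the semibrick decomposition $\Res S=T_1^{a_1}\oplus\cdots\oplus T_n^{a_n}$ whose summands are simple objects of $\WW'$ and hence bricks in $\mod kN$.

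Given part (1), part (2) follows by the classical Clifford argument: for each $g\in G$ the map $s\mapsto gs$ is a $k$-linear automorphism of $S$ that is $kN$-semilinear with respect to the conjugation automorphism $n\mapsto gng^{-1}$ of $N$, and therefore permutes the $kN$-indecomposable summands of $\Res S$ while preserving their $k$-dimension. This produces a $G$-action on $\{T_1^{a_1},\dots,T_n^{a_n}\}$; transitivity (2)(a) is obtained by remarking that the sum of summands in a single $G$-orbit is a nonzero $kG$-submodule of $S$ inside $\WW$, hence all of $S$ by simplicity of $S$ in $\WW$, and (2)(b) is then immediate. The main obstacle is the semisimplicity step for $\End_{kN}(\Res S)$: extracting a genuine semisimple algebra, rather than merely a $D$-vector space, from the $\Hom$ calculation inside $\WW$ is precisely where the wide-subcategory hypothesis is used in an essential way.
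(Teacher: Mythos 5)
Your setup (the companion wide subcategory $\Ind^{-1}(\WW)$, exactness of $\Ind$, and the observation $\Res S\in\Ind^{-1}(\WW)$) matches the paper, but the core of part (1) is not actually proved. You reduce the problem to showing $\End_{kN}(\Res S)$ is semisimple, identify it with $\Hom_{kG}(S,k[G/N]\otimes_k S)$, and then assert that analysing composition factors of $k[G/N]\otimes_k S$ in $\WW$ lets one ``organise these constituents'' into a product of matrix algebras --- and you yourself flag this as the main obstacle. Knowing which composition factors of $k[G/N]\otimes_k S$ admit nonzero maps from $S$ gives no control over the multiplicative structure transported to $\End_{kN}(\Res S)$ through the adjunction, so nothing in the sketch rules out a nonzero radical. (Also, ``$\Res S$ is semisimple in $\WW'$ iff $\End_{kN}(\Res S)$ is semisimple'' is not an equivalence: the latter only makes $\Res S$ a semibrick; its summands need not be simple objects of $\WW'$.) The missing idea is the paper's Lemma \ref{lem:ss}: pick a simple subobject $T$ of $\Res S$ \emph{inside} $\Ind^{-1}(\WW)$, note that the counit $\Ind T\to S$ lands in $\WW$ (since $T\in\Ind^{-1}(\WW)$) and is therefore epic by simplicity of $S$ in $\WW$, restrict it, and use Mackey to see $\Res\Ind T\iso\bigoplus_g gT$ is a semisimple object of the $G$-invariant wide subcategory $\Ind^{-1}(\WW)$; epimorphisms out of semisimple objects split, so $\Res S$ is a direct summand of $\bigoplus_g gT$ and hence semisimple there.

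Your argument for (2)(a) also has a genuine gap. You take the sum of the isotypic pieces in one $G$-orbit and call it ``a nonzero $kG$-submodule of $S$ inside $\WW$, hence all of $S$ by simplicity of $S$ in $\WW$.'' But $S$ is only simple \emph{as an object of} $\WW$: that kills subobjects of $S$ that belong to $\WW$, and there is no reason the $kG$-submodule generated by an orbit of $kN$-isotypic components lies in $\WW$. This is exactly where the weakened hypothesis (simple in a wide subcategory rather than simple module) bites, and it is the step the classical Clifford argument gets for free. The paper circumvents it by applying Lemma \ref{lem:ss} to $T_1$: since $\Res S$ is a direct summand of $\bigoplus_g gT_1$, Krull--Schmidt forces every $T_j$ to be isomorphic to some $gT_1$, which gives transitivity; the equality $gT_i^{a_i}=T_j^{a_j}$ of isotypic components then follows from uniqueness of isotypic subobjects in the abelian category $\Ind^{-1}(\WW)$ (Proposition \ref{prop:simple}(2)), not merely from semilinearity of the $g$-action. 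Your dimension and multiplicity claims in (2)(b) are fine once these two points are repaired.
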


We consider the case that the index of $N$ in $G$ is a power of $p$ (recall that $p>0$ denotes the characteristic of the field $k$).
Then the assumption on each wide subcategory of Theorem \ref{main-thm1} is satisfied automatically. 
Moreover, the restriction of semibricks to $kN$ is semibricks again.
That is, we have the following theorem.

\begin{theorem}[see Corollary \ref{p-group} and Proposition \ref{prop:sbrick}]
Let $G$ be a finite group, $N$ a normal subgroup of $G$ and $k$ a field of characteristic $p>0$. 
For any brick $S$ and semibrick $M$ for $kG$, if the index of $N$ in $G$ is a power of $p$, then the following hold:
\begin{enumerate}
    \item $\Res S$ and $\Res M$ are semibricks, here $\Res$ means the restriction functor from $\mod kG$ to $\mod kN$.
    \item If we decompose $\Res S=T_1^{a_1}\oplus\cdots\oplus T_n^{a_n}$, where $T_1, \ldots, T_n$ are pairwise non-isomorphic indecomposable modules, then the following statements hold.
\begin{enumerate}
    \item $G$ permutates \{$T_i^{a_i}\mid i=1,\ldots,n\}$ transitively.
    \item $\dim_kT_1=\cdots=\dim_kT_n$ and $a_1=\cdots=a_n$.
\end{enumerate}
\end{enumerate}

\end{theorem}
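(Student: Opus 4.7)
The plan is to reduce both assertions to Theorem \ref{main-thm1} by showing that when $[G:N]$ is a $p$-power, the stability hypothesis ``$\WW$ is stable under $k[G/N]\otimes_k-$'' becomes automatic for \emph{every} wide subcategory of $\mod kG$. The key observation I use is that in this case $G/N$ is a $p$-group and the field has characteristic $p$, so $k[G/N]$ has nilpotent augmentation ideal and therefore admits a filtration $0=F_0\subset F_1\subset\cdots\subset F_m=k[G/N]$ as a $kG$-module (via the quotient $G\twoheadrightarrow G/N$) whose subquotients are all isomorphic to the trivial module $k$. Tensoring this filtration over the field $k$ with any $W$ --- an exact operation --- yields a filtration of $k[G/N]\otimes_k W$ whose subquotients are all isomorphic to $W$. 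Since any wide subcategory is closed under extensions, $W\in\WW$ forces $k[G/N]\otimes_k W\in\WW$, which is exactly the stability hypothesis.

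Granting this, the statement ``$\Res S$ is a semibrick'' and part (2) follow immediately by applying Theorem \ref{main-thm1} to the smallest wide subcategory $\WW$ of $\mod kG$ containing the brick $S$ (in which $S$ is, by construction, a simple object). For the additional statement that $\Res M$ is a semibrick when $M=S_1\oplus\cdots\oplus S_n$ is a semibrick, I take $\WW=\Filt(M)$, whose simple objects are precisely the $S_i$, and combine the $(\Ind,\Res)$-adjunction with the classical tensor identity $kG\otimes_{kN}\Res S_j\cong k[G/N]\otimes_k S_j$ (with diagonal $G$-action on the right) to obtain
$$\Hom_{kN}(\Res S_i,\Res S_j)\;\cong\;\Hom_{kG}\bigl(S_i,\,k[G/N]\otimes_k S_j\bigr).$$
By the first paragraph the right-hand module lies in $\WW$, and its composition factors inside $\WW$ are all copies of $S_j$; since $S_i$ is simple in $\WW$ and $S_i\not\cong S_j$ for $i\neq j$, these Hom spaces must vanish. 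This rules out common indecomposable constituents of $\Res S_i$ and $\Res S_j$ and kills all cross-maps between their summands; together with each $\Res S_i$ being a semibrick by the brick case, it follows that $\Res M$ is a semibrick.

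The main point requiring care --- rather than a deep obstacle --- is invoking the tensor identity $kG\otimes_{kN}\Res Y\cong k[G/N]\otimes_k Y$ with the correct diagonal action, since this is what bridges the $\Hom_{kN}$-computation with the stability hypothesis controlling part (1). Beyond that, the entire argument is formal: the hypothesis that $[G:N]$ is a $p$-power is precisely what makes the stability condition of Theorem \ref{main-thm1} free of charge, and the semibrick statement amounts to running the same stability observation through Frobenius reciprocity.
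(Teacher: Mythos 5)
Your proposal is correct and follows essentially the same route as the paper: your first paragraph is the paper's Lemma \ref{lem:p-group}, the brick case is Corollary \ref{p-group} (applying Theorem \ref{thm:main} to $\Filt S$), and your adjunction-plus-filtration argument for $\Hom_{kN}(\Res S_i,\Res S_j)=0$ is exactly the paper's Lemma \ref{lem:vanish} (via Proposition \ref{prop:Al}(3), Proposition \ref{prop:indres} and Lemma \ref{lem:lemma-for-smc}) feeding into Proposition \ref{prop:sbrick}. The only cosmetic difference is that you phrase the vanishing step through simplicity of $S_i$ in $\Filt(M)$, where the paper uses the elementary induction on filtration length.
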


Brou\'{e}'s conjecture is still open problem. 
The conjecture predicts that the principal block $B_0(kG)$ of $G$ is derived equivalent to the one $B_0(kN_G(P))$ of $N_G(P)$ under the assumption that $P$ is an abelian group, where $P$ means a Sylow $p$-subgroup of $G$.
The conjecture does not hold in general without the assumption that $G$ has an abelian Sylow $p$-subgroup, but it is expected that there exist examples that Brou\'{e} conjecture holds even if $G$ has a non abelian Sylow $p$-subgroup.
Recently, it has been hoped that an analogue of the conjecture holds under the assumption that the finite group $G$ has a Sylow $p$-subgroup isomorphic to a non abelian metacyclic $p$-group $M_{n+1}(p)(\cong C_{p^n}\rtimes C_p)$ of order $p^{n+1}$, here $n \geq 2$. 
To be more specific, in this case, there exist a normal subgroup $N$ such that the index of $N$ in $G$ is $p$ and a Sylow $p$-subgroup $P$ of $N$ is isomorphic to a cyclic group of order $p^n$ (see \cite[Lemma 2.6]{HKK}), and we strongly believe that the principal block $B_0(kG)$ of $G$ is derived equivalent to the one $B_0(kN_G(P))$ of $N_G(P)$.
In fact, it is proved that there exists an isometry between $B_0(kG)$ and $B_0(kN_G(P))$ satisfying the separation condition preserving heights \cite[Theorem 1.1]{HKK}. This isometry is induced from that between $B_0(kN)$ and $B_0(kN_N(P))$ naturally.

Thus we hope that we can construct a derived equivalence between $B_0(kG)$ and $B_0(kN_G(P))$ from the one between $B_0(kN)$ and $B_0(kN_N(P))$ and there are some studies in which we construct objects for $B_0(kG)$ related to derived equivalence, such as support $\tau$-tilting modules and tilting complexes, from those for $B_0(kN)$ (\cite{KK21, KK23, Ko}).
In this paper, we consider simple-minded collections.
Simple-minded collections were first studied by Rickard in \cite{Ric2}, they are important for the considerations for derived equivalences for symmetric algebras.
Indeed, in many cases of the proof of Brou\'{e}'s conjecture, they use Okuyama method introduced in \cite{Oku}.
In this method, it is essential to find preimages of simple modules under the stable equivalences of Morita type induced by derived equivalences.
Also we know the way to find the preimages of the simple modules from simple-minded collections (\cite{Ric, Ric2}).
In particular, one case to consider is the case of two-term simple-minded collections because they are easier to calculate the images into the stable categories from derived categories.
From such viewpoints, giving the similarity between simple-minded collections for $kG$ and the one for $kN$ is key to the solution of an analogue of Brou\'{e}'s conjecture mentioned above.
Therefore we have the following result on simple-minded collections which can be applied to the above metacyclic situations as an application of Theorem \ref{main-thm1}.
Here, for a simple-minded collections $\XX$ in $D^b(\mod kG)$, we denote the set of indecomposable direct summands of $\Res X$ for all objects $X$ in $\XX$ by $\Res \XX$.

\begin{theorem}[see Theorem \ref{thm:main2}]
Let $k$ be a field of characteristic $p>0$, $G$ a finite group and $N$ a normal subgroup of $G$ of $p$-power index. Then, for two-term simple-minded collection $\mathcal{X}$ in $D^b(\mod kG)$, we have that $\Res \mathcal{X}$ is a two-term simple-minded collection in $D^b(\mod kN)$.

\end{theorem}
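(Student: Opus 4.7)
The plan is to exploit the standard decomposition of a two-term simple-minded collection $\XX$ into a ``degree-zero'' part and a ``shifted'' part, apply Proposition \ref{prop:sbrick} to each half, and then handle the cross-Hom vanishing and the thick-closure condition separately. The hypothesis that $[G:N]$ is a $p$-power will enter twice: via Proposition \ref{prop:sbrick} itself, and via the observation that $k[G/N]$ has only trivial composition factors as a $kG$-module when $G/N$ is a $p$-group in characteristic $p$.

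Concretely, I would first write $\XX = \XX_0 \sqcup \{M[1] \mid M \in \XX_1\}$, where $\XX_0$ is the set of modules in $\XX$ and $\XX_1$ is the set of modules $M$ with $M[1] \in \XX$. The simple-minded axioms force $M^+ := \bigoplus_{M \in \XX_0} M$ and $M^- := \bigoplus_{M \in \XX_1} M$ to be semibricks in $\mod kG$, together with the extra vanishing $\Hom_{kG}(M^+, M^-) = 0$ (specialising $\Hom(X, Y[i]) = 0$ to $X \in \XX_0$, $Y = M[1]$ with $M \in \XX_1$, and $i = -1$). Applying Proposition \ref{prop:sbrick} then yields that $\Res M^+$ and $\Res M^-$ are semibricks in $\mod kN$, which already settles the internal brick and $\Hom$-vanishing conditions within each half of $\Res \XX$.

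The main technical step will be the cross-Hom vanishing $\Hom_{kN}(\Res M^+, \Res M^-) = 0$. By Frobenius reciprocity this Hom rewrites as $\Hom_{kG}(M^+, k[G/N] \otimes_k M^-)$. Since $G/N$ is a $p$-group in characteristic $p$, the trivial module is the only simple $k[G/N]$-module; inflating to $kG$, the module $k[G/N]$ admits a filtration all of whose subquotients are the trivial $kG/N$-module. Tensoring with $M^-$ produces a filtration of $k[G/N] \otimes_k M^-$ by copies of $M^-$, and an induction using the long exact sequence of $\Hom_{kG}(M^+, -)$ together with the input $\Hom_{kG}(M^+, M^-) = 0$ yields the desired vanishing. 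The remaining shifted $\Hom$-vanishing conditions for $\Res \XX$ to be a simple-minded collection are then either automatic (negative Ext's vanish) or fall into a case already treated.

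For the generating condition $\thick \Res \XX = D^b(\mod kN)$, I would argue that since restriction is exact we have $\Res(\thick \XX) \subseteq \thick(\Res \XX)$, which together with $\thick \XX = D^b(\mod kG)$ shows that $\thick \Res \XX$ contains every $\Res Y$. Classical Clifford's theorem makes $\Res S$ semisimple for each simple $kG$-module $S$, and by Frobenius reciprocity every simple $kN$-module $T$ occurs as a summand of $\Res S$ for some simple $kG$-module $S$ (take $S$ to be a simple quotient of $\Ind T$, so that $T \hookrightarrow \Res S$ is forced to be a summand by semisimplicity). Hence every simple $kN$-module lies in $\thick \Res \XX$, forcing equality with $D^b(\mod kN)$. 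I expect the main obstacle to be the cross-Hom vanishing step, where the $p$-power index hypothesis is used most sharply through the composition-series structure of $k[G/N]$.
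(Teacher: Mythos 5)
Your overall strategy is the paper's: decompose $\XX$ via Proposition \ref{prop:two-term-smd} into a module part and a shifted part, restrict each half using Proposition \ref{prop:sbrick} (equivalently Corollary \ref{p-group} together with Lemma \ref{lem:vanish}), obtain the cross vanishing in degree $0$ from adjunction plus the filtration of $k[G/N]$ by trivial modules, and deduce generation from exactness of $\Res$. Your route to the generation condition through classical Clifford's theorem and a simple quotient of $\Ind T$ is correct, though more roundabout than the paper's argument, which simply notes that for $M\in\mod kN$ one has $\Ind M\in\thick\XX$ and that $M$ is a direct summand of $\Res\Ind M$ by Proposition \ref{prop:Mackey}.

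There is, however, one genuine gap. Writing $\Res\XX$ as the set of indecomposable summands $Z$ of the $\Res X_i$ together with the summands $Z'[1]$ of the $(\Res Y_j)[1]$, you must also check the vanishing $\Hom_{D^b(\mod kN)}(Z,Z'[1])=0$, i.e.\ $\Ext^1_{kN}(\Res X_i,\Res Y_j)=0$: this is part of the axiom requiring Hom vanishing between distinct members of the collection, it is a degree $+1$ morphism and hence not automatic, and it is not covered by your Hom-level cross vanishing $\Hom_{kN}(\Res M^+,\Res M^-)=0$. So the claim that the remaining conditions are ``either automatic or fall into a case already treated'' fails exactly here. The repair is the $\Ext^1$ analogue of your cross-Hom step: since $X_i\neq Y_j[1]$ in $\XX$, one has $\Ext^1_{kG}(X_i,Y_j)\iso\Hom_{D^b(\mod kG)}(X_i,Y_j[1])=0$, and then the Eckmann--Shapiro isomorphism of Proposition \ref{prop:Al}(5) combined with the filtration of $k[G/N]\otimes_k Y_j$ by copies of $Y_j$ and Lemma \ref{lem:lemma-for-smc} yields $\Ext^1_{kN}(\Res X_i,\Res Y_j)\iso\Ext^1_{kG}(X_i,k[G/N]\otimes_k Y_j)=0$; this is precisely the paper's Lemma \ref{lem:vanish1}. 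With this step added, your argument is complete and essentially identical to the paper's proof.
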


\medskip
\noindent
{\bf Organization.}
In Section \ref{sec:pre}, we collect basic definitions and facts. 
In Section \ref{sec:Clifford}, we give the brick version of Clifford's theorem. 
In Section \ref{sec:smc}, we consider the restriction of two-term simple minded collections for $kG$ to $kN$ under the assumption that the index of $N$ in $G$ is a power of $p$. 
In Appendix we collect some fact about subcategories for group algebras.

\medskip
\noindent
{\bf Conventions and notation.}
Throughout this paper, $k$ denotes a field of characteristic $p>0$. Let $G$ be a finite group and $N$ a normal subgroup of $G$. We denote by $\mod kG$ the category of finitely generated left $kG$-modules. We assume that all modules are finitely generated left modules and that all subcategories are full, additive, and closed under isomorphisms. 
We denote by $k_G$ the trivial $kG$-module, that is, the one-dimensional vector space on which each element $g\in G$ acts as the identity. We denote the induction functor and restriction functor by $\Ind$ and $\Res$ respectively: $\Ind:=\Ind_N^G:=kG\otimes_{kN} -: \mod kN \rightarrow \mod kG, \Res:=\Res^G_N: \mod kG\rightarrow \mod kN$ unless otherwise stated.

\section{Preliminaries}\label{sec:pre}

\subsection{Wide subcategories and semibricks}\label{sec:preriminary}

In this subsection, we collect basic definitions and properties of wide subcategories and semibricks. Let $\Lambda$ be a finite dimensional $k$-algebra.

\begin{definition}\label{def:basicdef}
  Let $\WW$ be a subcategory of $\mod\Lambda$.
  \begin{enumerate}
    \item $\WW$ is \emph{closed under extensions} if, for any short exact sequence 
    \[
    \begin{tikzcd}
      0 \rar & M_1 \rar & M \rar & M_2 \rar & 0
    \end{tikzcd}
    \]
    in $\mod\Lambda$, we have that $M_1, M_2 \in \WW$ implies $M \in \WW$.
    \item $\WW$ is \emph{closed under kernels (resp. cokernels)} if, for every morphism $\varphi \colon M_1 \to M_2$ with $M_1, M_2 \in \WW$, we have $\ker\varphi \in \WW$ (resp. $\coker\varphi\in\WW$).
    \item $\WW$ is a \emph{wide subcategory of $\mod\Lambda$} if $\WW$ is closed under extensions, kernels and cokernels.
  \end{enumerate}
\end{definition}

Every wide subcategory $\WW$ of $\mod\Lambda$ becomes an abelian category, and we always regard $\WW$ as such. Moreover a subcategory $\WW$ of $\mod\Lambda$ is a wide subcategory if and only if it satisfies the following conditions:
\begin{enumerate}
    \item $\WW$ is an abelian category,
    \item the inclusion $\WW\to\mod\Lambda$ is an exact functor,
    \item $\WW$ is closed under extensions in $\mod\Lambda$.
\end{enumerate}

In an abelian category $\AA$, an object $S$ is called a \emph{simple object} if every monomorphism $X\to S$ in $\AA$ is zero or an isomorphism, equivalently, every epimorphism $S\to X$ in $\AA$ is zero or an isomorphism. We denote by $\simp\AA$ the set of isoclasses of simple objects in $\AA$. 

\begin{definition}
\begin{enumerate}
    \item A $\Lambda$-module $S$ is a \emph{brick} if $\End_\Lambda(S)$ is a division algebra, that is, any nonzero endomorphism of $S$ is an ismorphism. 
    \item A set $\SS$ of isoclasses of bricks in $\mod\Lambda$ is a \emph{semibrick} in $\mod\Lambda$ if it satisfies the following: for any $S\neq S'\in\SS$, we have $\Hom_\Lambda(S,S')=0$. 
    \item A $\Lambda$-module $M$ is a \emph{semibrick} if there is a semibrick $\{S_1,\ldots,S_n\}$ in $\mod\Lambda$ such that $M\iso S_1^{a_1}\oplus\cdots\oplus S_n^{a_n}$ for some $a_i\in\Z_{>0}$. 
\end{enumerate}
\end{definition}

For a collection $\SS$ of objects in $\mod\Lambda$, we say that a $\Lambda$-module $X$ \emph{has an $\SS$-filtration} if there is a chain of submodules
\[
 0=X_0\se X_1 \se\cdots\se X_{n-1}\se X_n=X
\]
such that for any $i\in\{1,\ldots,n\}$, we have $X_i/X_{i-1}\iso S$ for some $S\in\SS$. We denote by $\Filt\SS$ the subcategory of $\mod\Lambda$ consisting of $X$ which has a $\SS$-filtration. Then $\Filt\SS$ is the smallest extension-closed subcategory of $\mod\Lambda$ containing $\SS$. 

The following result states that a semibrick is exactly the set of isoclasses of all simple objects in a wide subcategory.

\begin{theorem}[{\cite[1.2]{Rin}, \cite[Theorem 2.5]{Eno}} for exact categories]\label{thm:rin}
There are bijective correspondences between the following two sets:
\begin{enumerate}
    \item the set of semibricks in $\mod\Lambda$,
    \item the set of wide subcategories of $\mod\Lambda$.
\end{enumerate}
The map from $(1)$ to $(2)$ is given by $\SS\mapsto\Filt\SS$. The converse is given by $\WW\mapsto\simp\WW$.
\end{theorem}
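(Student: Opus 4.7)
The plan is to prove that the two assignments $\SS \mapsto \Filt\SS$ and $\WW \mapsto \simp\WW$ are well-defined and mutually inverse. First I would set up the easier direction: if $\WW$ is a wide subcategory of $\mod\Lambda$, then $\WW$ is abelian and every object has finite length (since $\WW\se\mod\Lambda$ and objects of $\mod\Lambda$ are finite-dimensional over $k$). Hence Schur's lemma inside the abelian category $\WW$ guarantees that for any $S,S'\in\simp\WW$ every nonzero morphism is an isomorphism, so $\End_\Lambda(S)$ is a division algebra and $\Hom_\Lambda(S,S')=0$ whenever $S\not\iso S'$. This shows $\simp\WW$ is a semibrick.

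Next I would prove the hard direction: if $\SS$ is a semibrick, then $\Filt\SS$ is a wide subcategory. Closure under extensions is automatic from the definition of $\Filt\SS$, so the real content is closure under kernels and cokernels. I would proceed by induction on the length of an $\SS$-filtration. Given a morphism $\varphi\colon X\to Y$ with $X,Y\in\Filt\SS$, pick a filtration $0=X_0\se X_1\se\cdots\se X_n=X$ with factors in $\SS$, and analyze the composition $X_1\hookrightarrow X\xrightarrow{\varphi} Y$. Since $X_1\iso S\in\SS$ is a brick and $Y$ has an $\SS$-filtration, the key lemma I would establish is that $\Hom_\Lambda(S,Y)$ is controlled by the factors of $Y$ isomorphic to $S$: the semibrick condition ($\Hom(S,S')=0$ for $S\not\iso S'$ in $\SS$) lets one show inductively that any such morphism either vanishes or, after composing with a suitable quotient of $Y$, becomes an isomorphism onto a subquotient. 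From this one deduces that the image of $X_1\to Y$ lies in $\Filt\SS$, and splitting $\varphi$ into the pieces $X_1\to Y$ and $\ov\varphi\colon X/X_1\to Y/\im(X_1\to Y)$ one applies the induction hypothesis together with closure under extensions (using the snake lemma) to see $\ker\varphi,\coker\varphi\in\Filt\SS$.

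Having both assignments well-defined, I would verify they are mutually inverse. For $\simp(\Filt\SS)=\SS$: every $S\in\SS$ is simple in the abelian category $\Filt\SS$ because any subobject $X\infl S$ inside $\Filt\SS$ is itself in $\Filt\SS$, and the first step of its filtration embeds a brick from $\SS$ into $S$, which by the semibrick property forces the embedding to be zero or onto; conversely any simple object of $\Filt\SS$ admits an $\SS$-filtration whose length must be $1$, hence it is isomorphic to some $S\in\SS$. For $\Filt(\simp\WW)=\WW$: the inclusion $\supseteq$ follows from $\WW$ being extension-closed, and the inclusion $\se$ follows because every object of $\WW$, being of finite length inside the abelian category $\WW$, admits a composition series with factors in $\simp\WW$.

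The main obstacle is the middle paragraph: proving $\Filt\SS$ is closed under kernels and cokernels. The subtlety is that morphisms in $\mod\Lambda$ between filtered objects need not be compatible with the given filtrations, so one cannot just compare layer by layer. The resolution is to exploit rigidity provided jointly by the brick condition (nonzero self-maps are isomorphisms) and the semibrick condition ($\Hom$ vanishes across non-isomorphic summands) to show that $\im\varphi$ inherits an $\SS$-filtration; once $\im\varphi\in\Filt\SS$ is known, kernels and cokernels are handled by induction and an application of the snake lemma together with extension-closure.
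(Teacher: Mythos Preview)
The paper does not supply its own proof of this theorem: it is stated with citations to Ringel \cite{Rin} and Enomoto \cite{Eno} and then used as a black box, so there is no in-paper argument to compare your proposal against.

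That said, your outline is essentially the standard proof and is sound. The crucial step is indeed showing that $\Filt\SS$ is closed under kernels and cokernels, and your inductive strategy via a first-layer analysis works. The cleanest formulation of your ``key lemma'' is: for $S\in\SS$ and $Y\in\Filt\SS$, any nonzero morphism $S\to Y$ is a monomorphism with cokernel in $\Filt\SS$. This is proved by induction on the filtration length of $Y$, splitting into the two cases where the composite $S\to Y\to S'$ (with $S'$ the top factor of $Y$) is zero or an isomorphism; the semibrick hypothesis is exactly what forces this dichotomy. Once that lemma is in hand, your snake-lemma reduction from $\varphi\colon X\to Y$ to $\ov\varphi\colon X/X_1\to Y/\im(\varphi|_{X_1})$ goes through, giving $\ker\varphi\iso\ker\ov\varphi$ and $\coker\varphi\iso\coker\ov\varphi$ in the case $\varphi|_{X_1}\neq 0$, and a short extension argument in the case $\varphi|_{X_1}=0$. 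Your verification of $\simp(\Filt\SS)=\SS$ and $\Filt(\simp\WW)=\WW$ is correct as written. The only place your write-up is loose is the phrasing ``becomes an isomorphism onto a subquotient''; replace this with the sharper statement above and the argument is complete.
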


To give a proof of Theorem \ref{thm:main}, we use the following well-known result.

\begin{proposition}\label{prop:simple}
Let $\AA$ be an abelian category and $S_1,\ldots, S_n$ simple objects in $\AA$. Consider an object $X=S_1^{a_1}\oplus\cdots\oplus S_n^{a_n}$ where $a_i$ denotes a positive integer. Then the following statements hold.
\begin{enumerate}
    \item Every epimorphism from $X$ splits.
    \item The subobject $S_i^{a_i}$ of $X$ is the unique largest subobject of $X$ isomorphic to a finite direct sum of copies of $S_i$.
\end{enumerate}
\end{proposition}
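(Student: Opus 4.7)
The plan is to prove (1) and (2) by two standard arguments based on simplicity; the statement is the categorical counterpart of the classical fact that a semisimple module has a unique isotypical decomposition.

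The first step, for part (2), is to record the Schur-type vanishing: for non-isomorphic simple objects $S\not\iso T$ of $\AA$ one has $\Hom_\AA(S,T)=0$, because any nonzero $f\colon S\to T$ would have $\ker f=0$ by simplicity of $S$ and $\im f=T$ by simplicity of $T$, hence would be both mono and epi and therefore an isomorphism. Given a subobject $Y\subseteq X$ isomorphic to a finite direct sum of copies of $S_i$, I would then compose $Y\hookrightarrow X$ with the projection $\pi\colon X\twoheadrightarrow\bigoplus_{j\neq i}S_j^{a_j}$; each component map between a copy of $S_i$ in $Y$ and a copy of $S_j$ in the target vanishes by the above, so $\pi|_Y=0$ and $Y\subseteq\ker\pi=S_i^{a_i}$. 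Since $S_i^{a_i}$ itself is of the required form, it is the unique largest such subobject.

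For (1), I would prove the stronger statement that every subobject $K\subseteq X$ is a direct summand; splitting an epimorphism $X\twoheadrightarrow Y$ then follows by applying this to its kernel. Write $X=\bigoplus_{i,k}S_{i,k}$ with $S_{i,k}\iso S_i$ and consider the (finite) collection of sub-sums $L=\bigoplus_{(i,k)\in I}S_{i,k}$ satisfying $L\cap K=0$; pick a maximal element $L$. I then claim $L+K=X$: otherwise some summand $S_{j,\ell}$ would not be contained in $L+K$, whence $S_{j,\ell}\cap(L+K)=0$ by simplicity of $S_{j,\ell}$, and a short modular-law manipulation yields $(L\oplus S_{j,\ell})\cap K=0$, contradicting maximality of $L$. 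Hence $X=L\oplus K$ and $K$ is a direct summand.

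The main technical obstacle is executing the intersection/sum manipulations of (1) purely categorically rather than with elements; the resolution is to use the modular law for subobjects of an abelian category together with the categorical definitions of $\cap$ as pullback and $+$ as image of the canonical map from the coproduct, which is enough to make every step in the element-style argument rigorous without appealing to the Freyd--Mitchell embedding.
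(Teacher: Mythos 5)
Your argument is correct, but there is nothing in the paper to compare it with: Proposition \ref{prop:simple} is stated there as a well-known fact and no proof is given, so your proposal supplies the omitted (standard) argument rather than an alternative to one. For (2) you use Schur vanishing between non-isomorphic simples and the fact that any subobject $Y\iso S_i^{m}$ is killed by the projection onto $\bigoplus_{j\neq i}S_j^{a_j}$, hence lands in its kernel $S_i^{a_i}$; for (1) you show the stronger statement that every subobject of $X$ is a direct summand via a maximal sub-sum $L$ with $L\cap K=0$, and then split an epimorphism by its kernel's complement. Both steps are sound. Two small points are worth making explicit. First, part (2) (both in your proof and in the way the paper uses the proposition in Theorem \ref{thm:main}, where the $T_i$ are pairwise non-isomorphic) tacitly requires $S_i\not\iso S_j$ for $i\neq j$; otherwise the uniqueness claim fails, and your appeal to $\Hom_\AA(S_i,S_j)=0$ needs exactly this hypothesis. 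Second, the modular-law manipulation you defer is indeed a one-liner in the (modular) subobject lattice of an object of an abelian category: since $L\se L+K$, modularity gives $(L+S_{j,\ell})\cap(L+K)=L+\bigl(S_{j,\ell}\cap(L+K)\bigr)=L$, hence $(L+S_{j,\ell})\cap K\se L\cap K=0$, which is the contradiction with maximality. With these details filled in, your proof is complete and self-contained; it even avoids Freyd--Mitchell, which would in any case be available in the paper's applications, where $\AA$ is a wide subcategory of $\mod kG$.
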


We use the following lemma later.

\begin{lemma}\label{lem:lemma-for-smc}
For $\Lambda$-modules $X$ and $Y$ and any $i\geq 0$, if $\Ext^i_{\Lambda}(X, Y)=0$, then we have that $\Ext^i_{\Lambda}(X, \Filt Y)=0$.
\end{lemma}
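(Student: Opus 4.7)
The plan is to unpack the statement as asserting that $\Ext^i_\Lambda(X, Z) = 0$ for every $Z \in \Filt Y$, and then to prove this by induction on the length $n$ of a $\{Y\}$-filtration of $Z$. Fix such a filtration
\[
0 = Z_0 \subseteq Z_1 \subseteq \cdots \subseteq Z_n = Z
\]
with every subquotient $Z_j / Z_{j-1}$ isomorphic to $Y$.

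For the base case $n = 1$ we simply have $Z \cong Y$, and the conclusion is immediate from the hypothesis $\Ext^i_\Lambda(X, Y) = 0$. For the inductive step, suppose the result is known for all modules admitting a $\{Y\}$-filtration of length less than $n$. Applying the contravariant functor $\Hom_\Lambda(X, -)$ to the short exact sequence
\[
0 \to Z_{n-1} \to Z_n \to Y \to 0
\]
yields a long exact sequence containing the segment
\[
\Ext^i_\Lambda(X, Z_{n-1}) \to \Ext^i_\Lambda(X, Z_n) \to \Ext^i_\Lambda(X, Y).
\]
The left term vanishes by the induction hypothesis, and the right term vanishes by assumption, so exactness forces $\Ext^i_\Lambda(X, Z_n) = 0$, completing the induction.

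Essentially no obstacle is expected here: the argument is a textbook dimension-shifting / d\'{e}vissage along the filtration, and the hypothesis on $i$ being fixed (rather than ranging over a set of indices) means one does not even need to track cancellations between neighbouring $\Ext$-degrees. The only thing to be mildly careful about is the convention that $\Filt Y$ means $\Filt \{Y\}$, i.e.\ the subcategory of modules admitting a finite filtration whose successive quotients are all isomorphic to $Y$; once this is made explicit, the induction proceeds exactly as above.
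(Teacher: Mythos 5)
Your proof is correct and follows essentially the same route as the paper: induction on the length of the $\{Y\}$-filtration, applying $\Hom_\Lambda(X,-)$ to the short exact sequence $0\to Z_{n-1}\to Z_n\to Y\to 0$ and using vanishing of the two outer $\Ext^i$ terms. The only quibble is terminological: $\Hom_\Lambda(X,-)$ is a \emph{covariant} functor, not contravariant, though the long exact sequence you invoke is the correct one.
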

\begin{proof}
Let $M$ be an object in $\Filt Y$. We demonstrate the statement by induction of lengths of the filtration of $M$. If $M \iso Y$, there is nothing to prove. Assume there exists an exact sequence $0\rightarrow Y \rightarrow M \rightarrow Y \rightarrow 0$. Then, by applying the functor $\Ext^i_{\Lambda}(X, -)$, we have an exact sequence $\Ext^i_{\Lambda}(X, Y) \rightarrow \Ext^i_{\Lambda}(X, M)\rightarrow \Ext^i_{\Lambda}(X, Y)$, but this forces $\Ext^i_{\Lambda}(X, M)=0$ since $\Ext^i_{\Lambda}(X, Y)=0$.
Also for an exact sequence $0\rightarrow M_1 \rightarrow M \rightarrow Y \rightarrow 0$, by the similar argument as above, we have an exact sequence $\Ext^i_{\Lambda}(X, M_1) \rightarrow \Ext^i_{\Lambda}(X, M)\rightarrow \Ext^i_{\Lambda}(X, Y)$. Hence we can proceed the induction.
\end{proof}

\subsection{Induction functors and restriction functors}

In this subsection, we recall some results on the induction and restriction functors. 

\begin{proposition}[{for example, see \cite[Lemma 8.5, Lemma 8.6]{Al} and \cite[COROLLARY 3.3.2]{B}}]\label{prop:Al}
Let $G$ be a finite group and $H$ a subgroup of $G$.
For any $kG$-module $U$, $kH$-module $V$, the induction functor $\Ind:=\Ind_H^G$ and the restriction functor $\Res:=\Res^G_H$, the following hold.
\begin{enumerate}
\item
$\Ind(V\otimes_k \Res U)\cong (\Ind V)\otimes_k U$,
\item
$\Hom_{kH}(V, \Res U)\cong \Hom_{kG}(\Ind V, U)$,
\item
$\Hom_{kH}(\Res U, V)\cong \Hom_{kG}(U, \Ind V)$,
\item
$\Ext^{1}_{kH}(V, \Res U)\cong \Ext^{1}_{kG}(\Ind V, U)$,
\item
$\Ext^{1}_{kH}(\Res U, V)\cong \Ext^{1}_{kG}(U, \Ind V)$.
\end{enumerate}
\end{proposition}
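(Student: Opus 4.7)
The plan is to treat the five statements in order of increasing complexity: (1) is the projection formula, (2) and (3) are the two Frobenius reciprocities expressing that, because $[G:H]$ is finite, the functor $\Ind$ is simultaneously left and right adjoint to $\Res$, and (4)--(5) follow from (2)--(3) by standard derived-functor arguments.

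For (1) I would write $\Ind(V\otimes_k\Res U)=kG\otimes_{kH}(V\otimes_k\Res U)$ and exhibit mutually inverse $kG$-homomorphisms
\[
 g\otimes(v\otimes u)\;\longmapsto\;(g\otimes v)\otimes gu,\qquad (g\otimes v)\otimes u\;\longmapsto\;g\otimes(v\otimes g^{-1}u),
\]
the verifications of $kH$-balance and $G$-equivariance being routine.

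Statement (2) is just the tensor--Hom adjunction applied to the bimodule ${}_{kG}(kG)_{kH}$:
\[
 \Hom_{kG}(kG\otimes_{kH}V,\,U)\;\cong\;\Hom_{kH}(V,\,\Hom_{kG}(kG,U))\;\cong\;\Hom_{kH}(V,\,\Res U).
\]
For (3) the essential input is the Frobenius property of the extension $kH\subseteq kG$: fixing left coset representatives $g_1,\ldots,g_r$ of $H$ in $G$ yields an isomorphism of $(kG,kH)$-bimodules $\Hom_{kH}(kG,kH)\cong kG$, and hence a natural isomorphism $\Hom_{kH}(kG,V)\cong kG\otimes_{kH}V=\Ind V$ of $kG$-modules for every $V$. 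Composing this with the tensor--Hom adjunction $\Hom_{kG}(U,\Hom_{kH}(kG,V))\cong\Hom_{kH}(\Res U,V)$ yields (3). I expect verifying this Frobenius isomorphism to be the main technical point; once it is in hand, everything else is formal.

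Finally, for (4) and (5), I would use that both $\Ind$ and $\Res$ are exact (the former because $kG$ is $kH$-free on any set of coset representatives), so the adjunctions (2) and (3) force each of $\Ind$ and $\Res$ to preserve projective and injective modules. Taking a projective resolution $P_\bullet\to V$ over $kH$, the complex $\Ind P_\bullet\to\Ind V$ is then a projective resolution over $kG$, and applying the isomorphism (2) termwise gives
\[
 \Ext^1_{kG}(\Ind V,U)=H^1\Hom_{kG}(\Ind P_\bullet,U)\cong H^1\Hom_{kH}(P_\bullet,\Res U)=\Ext^1_{kH}(V,\Res U),
\]
proving (4); statement (5) is proved symmetrically, starting from a projective resolution of $U$ over $kG$, applying $\Res$ (which is again exact and preserves projectives), and invoking (3) termwise.
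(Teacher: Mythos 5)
Your proof is correct, but note that the paper does not prove this proposition at all: it is quoted as a standard fact with references to Alperin's \emph{Local representation theory} and Benson's book. Your argument --- the explicit mutually inverse maps for the projection formula in (1), the tensor--Hom adjunction for (2), the Frobenius isomorphism $\Hom_{kH}(kG,V)\cong\Ind V$ identifying coinduction with induction for (3), and the Eckmann--Shapiro-type argument via exactness of $\Ind$, $\Res$ and preservation of projectives for (4) and (5) --- is exactly the standard proof found in those references, so there is nothing to reconcile.
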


From now on, for a normal subgroup $N$ of a finite group $G$, we abbreviate $\Ind_N^G$ with $\Ind$ and $\Res_N^G$ with $\Res$ unless stated otherwise. In case that a subgroup $N$ of $G$ is a normal subgroup, the following holds.

\begin{proposition}\label{prop:indres}
For any $kG$-module $M$, it holds that $\Ind\Res M \cong k[G/N]\otimes_k M$.
\end{proposition}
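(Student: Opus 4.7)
The plan is to exhibit an explicit $kG$-linear isomorphism. Equip $k[G/N]\otimes_k M$ with the diagonal $kG$-action $h\cdot(xN\otimes m)=hxN\otimes hm$, and define
\[
\phi\colon kG\otimes_{kN}M\longrightarrow k[G/N]\otimes_k M,\qquad g\otimes m\longmapsto gN\otimes gm,
\]
on simple tensors, extended $k$-linearly. The whole proof amounts to verifying that $\phi$ is well-defined, $kG$-equivariant, and bijective.

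First I would check well-definedness over $kN$: for $n\in N$ we have $\phi(gn\otimes m)=gnN\otimes gnm=gN\otimes gnm$, and $\phi(g\otimes nm)=gN\otimes g(nm)$, which coincide; $k$-bilinearity is clear. Then $kG$-equivariance follows from the computation $\phi(hg\otimes m)=hgN\otimes hgm=h\cdot(gN\otimes gm)=h\cdot\phi(g\otimes m)$, where the twist by $g$ in the second factor is precisely what makes both well-definedness and equivariance work.

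For bijectivity I would fix a set of coset representatives $g_1,\dots,g_r$ for $G/N$. Then $kG=\bigoplus_{i}g_i\cdot kN$ as a right $kN$-module, so
\[
kG\otimes_{kN}M=\bigoplus_{i=1}^r g_i\otimes M,
\]
while $k[G/N]\otimes_k M=\bigoplus_{i=1}^r (g_iN)\otimes M$ as $k$-vector spaces. Under $\phi$, the summand $g_i\otimes M$ is sent into $(g_iN)\otimes M$ via $g_i\otimes m\mapsto g_iN\otimes g_im$, and this is a $k$-linear isomorphism because multiplication by $g_i$ is a $k$-linear automorphism of $M$. Assembling these componentwise bijections yields that $\phi$ itself is bijective.

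The main (and really only) subtlety is keeping track of the correct $kG$-module structure on $k[G/N]\otimes_k M$: it is the diagonal action, not the action on the first tensor factor alone. Once that is fixed, the argument is a direct check on simple tensors together with the standard freeness of $kG$ as a right $kN$-module on coset representatives.
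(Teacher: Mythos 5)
Your proof is correct, but it proceeds differently from the paper. The paper's argument is a one-line chain of natural isomorphisms: it applies the tensor identity of Proposition \ref{prop:Al} (1), $\Ind(V\otimes_k\Res U)\cong(\Ind V)\otimes_k U$, with $V=k_N$ and $U=M$, together with $\Ind k_N\cong k[G/N]$, to get $\Ind\Res M\cong\Ind(k_N\otimes_k\Res M)\cong(\Ind k_N)\otimes_k M\cong k[G/N]\otimes_k M$. You instead construct the isomorphism by hand, $g\otimes m\mapsto gN\otimes gm$, verify $kN$-balancedness and $kG$-equivariance for the diagonal action (correctly identified as the relevant module structure on $k[G/N]\otimes_k M$), and prove bijectivity using the freeness of $kG$ as a right $kN$-module on coset representatives, which gives compatible direct sum decompositions $kG\otimes_{kN}M=\bigoplus_i g_i\otimes M$ and $k[G/N]\otimes_k M=\bigoplus_i (g_iN)\otimes M$ with each component map $m\mapsto g_im$ invertible. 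All of these checks are sound. The trade-off: the paper's route is shorter and stays at the level of the standard adjunction-type identities it has already recorded, while yours is self-contained, exhibits the isomorphism explicitly (in effect reproving the special case $V=k_N$ of Proposition \ref{prop:Al} (1)), and makes visible that normality of $N$ plays no role in this particular statement.
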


\begin{proof}
For a trivial $kN$-module $k_N$, we have that
$$ \Ind\Res M \cong \Ind(k_N \otimes_k \Res M)\cong (\Ind k_N) \otimes_k M \cong k[G/N]\otimes_k M. $$
\end{proof}

The following is so-called Mackey's decomposition formula for normal subgroups.
We recall that for a $kN$-module $U$ and an element $g$ in $G$, the set $gU:=\{gu\:|\: u\in U\}$ has a $kN$-module structure by setting $x(gu) := g(g^{-1}xgu)$ for $gu \in gU$ and $x \in N$.

\begin{proposition}[{\cite[Lemma 8.7]{Al}}]\label{prop:Mackey}
For any $kN$-module $U$, it holds that $\Res\Ind U \cong \bigoplus_{g}gU$, where $g$ runs over representatives of the coset $G/N$ in $G$.
\end{proposition}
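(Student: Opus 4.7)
The plan is to start from the definition $\Ind U = kG \otimes_{kN} U$ and exploit the structure of $kG$ as a free right $kN$-module. Since $N$ is normal in $G$, we may fix a set of coset representatives $\{g\}$ for $G/N$ and write $kG = \bigoplus_{g} g \cdot kN$ as a right $kN$-module, this decomposition being free of rank $[G:N]$. Applying the functor $-\otimes_{kN} U$ then gives, as a $k$-vector space,
$$\Ind U \;\cong\; \bigoplus_{g} (g \cdot kN)\otimes_{kN} U \;\cong\; \bigoplus_{g} g\otimes U,$$
where $g\otimes U$ denotes the $k$-subspace spanned by the elements $g\otimes u$ with $u \in U$. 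This already gives the correct underlying vector space decomposition.

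Next, I would compute the $kN$-action on each direct summand. For $x \in N$ and $u \in U$, using normality of $N$ (so that $g^{-1}xg \in N$) and the defining relations of the tensor product over $kN$, one obtains
$$x \cdot (g\otimes u) \;=\; (xg)\otimes u \;=\; g(g^{-1}xg)\otimes u \;=\; g\otimes (g^{-1}xg)u.$$
Comparing with the definition of $gU$ recalled just before the proposition, namely $x\cdot(gu) = g((g^{-1}xg)u)$, the $k$-linear map $g\otimes U \to gU$ sending $g\otimes u \mapsto gu$ is a $kN$-module isomorphism.

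Assembling these isomorphisms over all coset representatives yields the desired $kN$-module isomorphism $\Res\Ind U \cong \bigoplus_{g} gU$. I do not expect any serious obstacle: the entire content of the argument is the observation that the $kN$-action on the summand indexed by $g$ is twisted by conjugation by $g$, and that this twist is well-defined precisely because $N$ is normal. This is exactly what forces the appearance of the conjugate modules $gU$, rather than copies of $U$ itself, on the right-hand side.
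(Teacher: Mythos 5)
Your proof is correct: decomposing $kG=\bigoplus_g g\cdot kN$ as a free right $kN$-module, applying $-\otimes_{kN}U$, and checking that the $N$-action on the summand $g\otimes U$ is twisted by conjugation (well-defined since $N\trianglelefteq G$) is exactly the standard argument. The paper itself gives no proof, simply citing \cite[Lemma 8.7]{Al}, and your argument is essentially the one found there, so there is nothing to reconcile.
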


\section{Clifford's theorem for bricks}\label{sec:Clifford}

In this section, we aim at giving the brick version of Clifford's theorem.
First, we define a $G$-invariance of a subcategory of $\mod kN$.
\begin{definition}\label{def:G-stable}
A subcategory $\CC$ of $\mod kN$ is \emph{$G$-invariant} if for any $g\in G$ and any $C\in \CC$, we have $gC\in\CC$. 
\end{definition}

\begin{remark}\label{rem:auto}
For a $G$-invariant subcategory $\CC$ of $\mod kN$, any element $g$ in $G$ induces an auto-equivalence $g\cdot-\colon \CC\to \CC$. 
\end{remark}

\begin{lemma}\label{prop:G-inv-of-indinverse}
Let $\CC$ be a subcategory of $\mod kG$. Then 
\[
\Ind^{-1}(\CC)=\{X\in\mod kN\mid \Ind X\in\CC\}
\]
is a $G$-invariant subcategory of $\mod kN$.
\end{lemma}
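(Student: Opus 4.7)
The plan is to reduce the lemma to establishing the isomorphism $\Ind X \iso \Ind(gX)$ of $kG$-modules for every $g\in G$ and every $X\in\mod kN$. Once this is known, the result follows immediately: if $X\in \Ind^{-1}(\CC)$, then $\Ind(gX)\iso \Ind X\in \CC$, so $gX\in \Ind^{-1}(\CC)$, using that subcategories are assumed closed under isomorphisms throughout the paper.

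To produce the required isomorphism I would write down the candidate map
\[
\varphi\colon kG\otimes_{kN} gX \longrightarrow kG\otimes_{kN} X,\qquad h\otimes gu\longmapsto hg\otimes u,
\]
together with its candidate inverse $h\otimes u\mapsto hg^{-1}\otimes gu$. Well-definedness on the balanced tensor product uses the explicit twisted $kN$-structure on $gX$ recalled before Proposition \ref{prop:Mackey}, namely $x\cdot(gu)=g(g^{-1}xg)u$ for $x\in N$. For instance, the images of $hx\otimes gu$ and $h\otimes x\cdot(gu)$ under $\varphi$ are $hxg\otimes u$ and $hg\otimes(g^{-1}xg)u$ respectively, and these coincide in $kG\otimes_{kN}X$ because $g^{-1}xg\in N$; the verification for the inverse and the check that the two maps compose to the identities are symmetric. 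The $kG$-linearity of both maps is immediate from the fact that $G$ acts on the left tensor factor.

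The main point requiring care is the balancing condition with respect to the twisted $N$-action, since the conjugation $g^{-1}(-)g$ has to be tracked consistently on both tensor factors; no conceptual difficulty arises. Alternatively, one can avoid the elementwise computation entirely by first verifying that $gX\to \Ind X$, $gu\mapsto g\otimes u$, is $kN$-linear and then taking its mate under the $(\Ind,\Res)$-adjunction of Proposition \ref{prop:Al}(2) to obtain $\varphi$ abstractly, with the inverse produced in the symmetric way using $g^{-1}$ in place of $g$.
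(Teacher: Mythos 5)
Your proposal is correct and follows essentially the same route as the paper: the paper's proof consists precisely of the observation that $\Ind(gX)\iso\Ind X\in\CC$, stated without further justification. Your explicit verification of this isomorphism (and the adjunction alternative) is a correct filling-in of that one step, so nothing is missing or different in substance.
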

\begin{proof}
Let $X$ be an object in $\Ind^{-1}(\CC)$ and $g$ an element in $G$. Then we have $\Ind(gX)\iso\Ind X\in\CC$, hence $gX\in\Ind^{-1}(\CC)$. 
\end{proof}

\begin{proposition}\label{prop:ind-inverse}
Let $\WW$ be a wide subcategory of $\mod kG$. Then the subcategory $\Ind^{-1}(\WW)$ is a $G$-invariant wide subcategory of $\mod kN$. 
\end{proposition}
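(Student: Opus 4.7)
The plan is to combine Lemma \ref{prop:G-inv-of-indinverse}, which already gives the $G$-invariance of $\Ind^{-1}(\WW)$, with a straightforward verification of the three wide-subcategory axioms (closure under kernels, cokernels and extensions). The one non-trivial ingredient I need is the exactness of the induction functor $\Ind = kG\otimes_{kN}-$, and this is free of charge: since $N$ has finite index in $G$, $kG$ is free as a right $kN$-module (on a set of coset representatives), so $\Ind$ is an exact functor from $\mod kN$ to $\mod kG$.

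Given exactness, the verification is essentially automatic. First I would take a morphism $\varphi\colon X_1\to X_2$ in $\Ind^{-1}(\WW)$, so that $\Ind X_1,\Ind X_2\in\WW$. Because $\Ind$ is exact,
\[
 \Ind(\ker\varphi)\iso\ker(\Ind\varphi),\qquad \Ind(\coker\varphi)\iso\coker(\Ind\varphi),
\]
and these lie in $\WW$ by closure of $\WW$ under kernels and cokernels; hence $\ker\varphi,\coker\varphi\in\Ind^{-1}(\WW)$. Next, for a short exact sequence $0\to X_1\to X\to X_2\to 0$ in $\mod kN$ with $X_1,X_2\in\Ind^{-1}(\WW)$, exactness of $\Ind$ yields a short exact sequence $0\to\Ind X_1\to\Ind X\to\Ind X_2\to 0$ in $\mod kG$ whose outer terms are in $\WW$, so $\Ind X\in\WW$ by closure of $\WW$ under extensions, giving $X\in\Ind^{-1}(\WW)$.

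There is no real obstacle here: the only thing one has to notice is that $\Ind$ is exact, after which the proof is just ``pull the wide-subcategory axioms back through an exact functor.'' The $G$-invariance statement is quoted from Lemma \ref{prop:G-inv-of-indinverse}, so I would merely cite it at the end.
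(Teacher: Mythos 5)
Your proposal is correct and follows essentially the same route as the paper: the paper also quotes Lemma \ref{prop:G-inv-of-indinverse} for the $G$-invariance and then appeals to the exactness of $\Ind$ to deduce that $\Ind^{-1}(\WW)$ is wide (you merely spell out the pullback of the kernel, cokernel and extension closures, which the paper leaves as "easy to show"). No issues.
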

\begin{proof}
The $G$-invariance of $\Ind^{-1}(\WW)$ follows from Lemma \ref{prop:G-inv-of-indinverse}.
Since $\Ind$ is an exact functor, it is easy to show that $\Ind^{-1}(\WW)$ is a wide subcategory of $\mod kN$. 
\end{proof}

For a functor $k[G/N]\otimes_k-: \mod kG \rightarrow \mod kG$, we define a stability under the functor $k[G/N]\otimes_k -$ of a subcategory of $\mod kG$.
\begin{definition}\label{def:tensor-stable}
A subcategory $\DD$ of $\mod kG$ is \emph{stable under $k[G/N]\otimes_k-$} if for any $D\in\DD$, we have $k[G/N]\otimes_kD\in\DD$. 
\end{definition}

\begin{proposition}\label{prop:Res}
Let $\WW$ be a wide subcategory of $\mod kG$ stable under $k[G/N]\otimes_k-$. Then the restriction functor $\Res$ induces an exact functor $\WW\to\Ind^{-1}(\WW)$ between wide subcategories.
\end{proposition}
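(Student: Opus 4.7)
The plan is to verify two things: first, that $\Res$ sends objects of $\WW$ into $\Ind^{-1}(\WW)$, and second, that $\Res$ viewed with this codomain is exact. The whole argument is packaged into the isomorphism $\Ind\Res M \cong k[G/N]\otimes_k M$ from Proposition \ref{prop:indres} together with the stability hypothesis on $\WW$, so I expect the proof to be short.

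For the first point, given any $W\in\WW$, I would apply Proposition \ref{prop:indres} to obtain $\Ind\Res W \cong k[G/N]\otimes_k W$; by the assumed stability of $\WW$ under $k[G/N]\otimes_k-$, the right-hand side lies in $\WW$, and hence $\Res W\in\Ind^{-1}(\WW)$ by definition of the latter subcategory. This is the only place the tensor-stability hypothesis is used, and it is used in exactly the form in which it is stated.

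For the second point, I would recall that the restriction functor $\Res\colon \mod kG \to \mod kN$ is exact, since exactness of a sequence of $kG$-modules coincides with exactness of the underlying sequence of $k$-vector spaces, which is also the exactness condition for $kN$-modules. Proposition \ref{prop:ind-inverse} guarantees that $\Ind^{-1}(\WW)$ is a wide subcategory of $\mod kN$, so its inclusion into $\mod kN$ is exact, and combined with the above the induced functor $\WW\to\Ind^{-1}(\WW)$ is exact as well. I do not anticipate any genuine obstacle here; the statement is essentially a clean reformulation of Proposition \ref{prop:indres} under the stability assumption, and the role of Proposition \ref{prop:ind-inverse} is only to give meaning to the phrase ``between wide subcategories.''
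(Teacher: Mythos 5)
Your proposal is correct and follows essentially the same route as the paper: the paper likewise invokes Proposition \ref{prop:ind-inverse} for the wideness of $\Ind^{-1}(\WW)$ and deduces the rest from the isomorphism $\Ind\Res X\iso k[G/N]\otimes_k X$ together with the stability hypothesis. Your write-up simply makes explicit the details (membership of $\Res W$ in $\Ind^{-1}(\WW)$ and exactness via the exact embeddings of the two wide subcategories) that the paper leaves as ``follows easily.''
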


\begin{proof}
By Proposition \ref{prop:ind-inverse}, we have that $\Ind^{-1}(\WW)$ is a wide subcategory. The remaining follows easily from the isomorphism $\Ind\Res X\iso k[G/N]\otimes_kX$ for any $X\in\mod kG$. 
\end{proof}

\begin{proposition}\label{prop:G-inv-of-resinverse}
Let $\DD$ be a subcategory of $\mod kN$. Then 
\[
\Res^{-1}(\DD)=\{Y\in\mod kG\mid \Res Y\in\DD\}
\]
is stable under $k[G/N]\otimes_k-$.
\end{proposition}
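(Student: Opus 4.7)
The plan is to fix any $Y\in\Res^{-1}(\DD)$ (so $Y\in\mod kG$ with $\Res Y\in\DD$) and prove $\Res(k[G/N]\otimes_k Y)\in\DD$.

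The most efficient route is a direct calculation. Because $N$ is normal in $G$, it acts trivially on the left cosets $G/N$, so $k[G/N]$ restricts to a trivial $kN$-module of $k$-dimension $[G:N]$. Consequently the diagonal $G$-action on $k[G/N]\otimes_k Y$ restricts along $N$ to $n\cdot(c\otimes y)=c\otimes ny$, and one obtains
\[
\Res(k[G/N]\otimes_k Y)\cong k[G/N]\otimes_k\Res Y\cong(\Res Y)^{[G:N]}
\]
as $kN$-modules. The same conclusion is also reachable by combining Proposition \ref{prop:indres} (to identify $k[G/N]\otimes_k Y\cong\Ind\Res Y$) with Mackey's formula (Proposition \ref{prop:Mackey}), noting that for $Y\in\mod kG$ each conjugate module $g(\Res Y)$ is isomorphic to $\Res Y$ itself via the left $g$-action on $Y$.

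Since $\Res Y\in\DD$ and, by the paper's standing conventions, every subcategory is additive, this finite direct sum lies in $\DD$, which says precisely that $k[G/N]\otimes_k Y\in\Res^{-1}(\DD)$. The only step requiring a sliver of care is the initial observation that $N$ acts trivially on $G/N$ (equivalently, the identification $g(\Res Y)\cong\Res Y$ for $Y\in\mod kG$); the rest of the argument is entirely formal.
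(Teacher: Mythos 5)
Your proof is correct and follows the same route as the paper: restrict to get $\Res(k[G/N]\otimes_k Y)\cong \Res k[G/N]\otimes_k\Res Y$, observe that $\Res k[G/N]$ is a trivial $kN$-module so the result is a finite direct sum of copies of $\Res Y$, and conclude by additivity of $\DD$. The alternative via $\Ind\Res Y$ and Mackey's formula is also fine, but the main argument is essentially identical to the paper's.
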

\begin{proof}
Let $Y$ be an object in $\Res^{-1}(\DD)$. Then $\Res(k[G/N]\otimes_k Y)=\Res k[G/N]\otimes_k \Res Y$. Now $\Res k[G/N]$ is a direct sum of trivial $kN$-modules, which means that $\Res (k[G/N]\otimes_k Y)$ is a direct sum of copies of $\Res Y$. Thus $k[G/N]\otimes_k Y$ belongs to $\Res^{-1}(\DD)$.
\end{proof}

Here is our main theorem stated in Section \ref{sec:intro}.
\begin{theorem}\label{thm:main}
Let $\WW$ be a wide subcategory of $\mod kG$ stable under $k[G/N]\otimes_k-$. Then Clifford's theorem holds for any simple object $S$ in $\WW$, that is, the following hold.
\begin{enumerate}
    \item $\Res S$ is a semisimple object in $\Ind^{-1}(\WW)$, in particular, a semibrick.
    \item If we decompose $\Res S=T_1^{a_1}\oplus\cdots\oplus T_n^{a_n}$, where $T_1, \ldots, T_n$ are pairwise non-isomorphic indecomposable $kN$-modules, then the following statements hold.
\begin{enumerate}
    \item $G$ permutates \{$T_i^{a_i}\mid i=1,\ldots,n\}$ transitively.
    \item $\dim_kT_1=\cdots=\dim_kT_n$ and $a_1=\cdots=a_n$.
\end{enumerate}
\end{enumerate}
\end{theorem}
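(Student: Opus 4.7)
The plan is to mimic the classical proof of Clifford's theorem inside the abelian category $\Ind^{-1}(\WW)$ provided by Proposition~\ref{prop:ind-inverse}, and then to transfer the conclusions to $\mod kN$ via Theorem~\ref{thm:rin} applied to this wide subcategory.

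For semisimplicity (part (1)), I would first observe that Proposition~\ref{prop:Res} places $\Res S$ in $\Ind^{-1}(\WW)$, and that $\Res S$ has finite length in this abelian category since its subobjects are $k$-subspaces of fixed finite dimension. Picking a simple subobject $T\in\Ind^{-1}(\WW)$ of $\Res S$, the inclusion $T\hookrightarrow\Res S$ corresponds by Proposition~\ref{prop:Al}(2) to a morphism $\Ind T\to S$ in $\WW$; this morphism is nonzero because the counit $\Ind\Res S\to S$ is the surjection $g\otimes s\mapsto gs$ and therefore does not vanish on $1\otimes T$. Simplicity of $S$ in $\WW$ then forces the morphism to be an epimorphism, and applying $\Res$ together with Proposition~\ref{prop:Mackey} yields an epimorphism $\bigoplus_g gT\twoheadrightarrow\Res S$. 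By Remark~\ref{rem:auto} each $gT$ is simple in $\Ind^{-1}(\WW)$, so $\Res S$ is a quotient of a semisimple object in $\Ind^{-1}(\WW)$, hence itself semisimple. Schur's lemma inside $\Ind^{-1}(\WW)$ — whose hom spaces agree with those of $\mod kN$ because it is closed under kernels and cokernels — then shows that the indecomposable summands form a semibrick in $\mod kN$.

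For part (2), write $\Res S=T_1^{a_1}\oplus\cdots\oplus T_n^{a_n}$. For each $g\in G$, the $kG$-module structure on $S$ provides a $kN$-module isomorphism $\Res S\cong g\Res S=\bigoplus_i (gT_i)^{a_i}$, and Krull--Schmidt yields a permutation $\sigma$ of $\{1,\ldots,n\}$ with $gT_i\cong T_{\sigma(i)}$ and $a_i=a_{\sigma(i)}$. To establish transitivity of the $G$-action, I would fix a single $G$-orbit $\mathcal{O}=\{T_{i_1},\ldots,T_{i_r}\}$, set $V:=\bigoplus_j T_{i_j}^{a_{i_j}}\subseteq\Res S$, and apply the same adjunction argument as in the previous paragraph to the subobject $V\hookrightarrow\Res S$: this yields a nonzero, and hence epic, morphism $\Ind V\twoheadrightarrow S$ in $\WW$, and restricting gives an epimorphism $\bigoplus_g gV\twoheadrightarrow\Res S$. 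Every indecomposable summand of $\Res S$ must then appear among the $gT_{i_j}$, so $\mathcal{O}$ exhausts $\{T_1,\ldots,T_n\}$. Part~(2)(b) follows immediately: $gT_i\cong T_j$ yields $\dim_k T_i=\dim_k T_j$, comparing multiplicities in $\Res S$ and $g\Res S$ yields $a_i=a_j$, and transitivity propagates both equalities across the whole list.

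The delicate step in this plan is the first one: verifying the existence of a simple subobject of $\Res S$ inside the abelian category $\Ind^{-1}(\WW)$ (rather than merely inside $\mod kN$), and confirming that the adjoint morphisms $\Ind T\to S$ and $\Ind V\to S$ are genuinely nonzero so that simplicity of $S$ in $\WW$ can be applied. Both points amount to a careful inspection of the counit of $(\Ind,\Res)$ together with the observation that finite $k$-dimension forces finite length in any wide subcategory; once they are settled, the remainder of the argument is a formal manipulation of the adjunction, Mackey's formula, and the auto-equivalences $g\cdot-$.
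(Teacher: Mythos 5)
Your proposal is correct and follows essentially the same route as the paper: the key step in both is to take a simple subobject $T$ of $\Res S$ in the $G$-invariant wide subcategory $\Ind^{-1}(\WW)$, pass via the adjunction to an epimorphism $\Ind T\twoheadrightarrow S$ in $\WW$, and use Mackey's formula together with semisimplicity (Proposition~\ref{prop:simple}) in $\Ind^{-1}(\WW)$ to conclude. Your part~(2) differs only cosmetically (Krull--Schmidt plus an orbit-sum $V$, where the paper applies the same splitting lemma to $T_1$ and uses uniqueness of isotypic components), so the two arguments are interchangeable.
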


To give a proof of the above, we make one consideration.
In the setting of Theorem \ref{thm:main}, the restricted module $\Res S$ is an object in $\Ind^{-1}(\WW)$ by Proposition \ref{prop:Res}, hence we can take a simple subobject $T$ of $\Res S$ in  $\Ind^{-1}(\WW)$. We show the following lemma on this simple subobject $T$.

\begin{lemma}\label{lem:ss}
In the setting of Theorem \ref{thm:main}, for a simple object $S$ in $\WW$ and a simple subobject $T$ of $\Res S$ in $\Ind^{-1}(\WW)$, it holds that $\Res S$ is a direct summand of $\bigoplus_{g\in[G/N]}gT$.  
\end{lemma}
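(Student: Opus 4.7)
The plan is to combine Frobenius reciprocity with simplicity of $S$ in $\WW$, and then use Mackey's formula together with semisimplicity of $\bigoplus_g gT$ inside the wide subcategory $\Ind^{-1}(\WW)$.

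First, I would unfold the hypothesis on $T$: the inclusion $\iota\colon T\hookrightarrow \Res S$ is a nonzero morphism in $\mod kN$, so by the adjunction in Proposition \ref{prop:Al}(2) its adjoint $\widetilde{\iota}\colon \Ind T\to S$ is a nonzero morphism in $\mod kG$. Since $T\in\Ind^{-1}(\WW)$, the module $\Ind T$ lies in $\WW$, and $S\in\WW$ by assumption. Because $\WW$ is a wide subcategory (with exact inclusion into $\mod kG$), the image $\Image(\widetilde{\iota})$ computed in $\mod kG$ coincides with the image computed inside $\WW$, so $\Image(\widetilde{\iota})$ is a nonzero subobject of $S$ in $\WW$. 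Simplicity of $S$ in $\WW$ then forces $\widetilde{\iota}$ to be an epimorphism.

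Next, I would apply the exact functor $\Res$ to the epimorphism $\widetilde{\iota}\colon\Ind T\twoheadrightarrow S$. Using Mackey's formula (Proposition \ref{prop:Mackey}), we obtain an epimorphism
\[
\bigoplus_{g\in[G/N]} gT \;\cong\; \Res\Ind T \twoheadrightarrow \Res S
\]
in $\mod kN$. It remains to split it. By Proposition \ref{prop:ind-inverse}, $\Ind^{-1}(\WW)$ is a $G$-invariant wide subcategory of $\mod kN$, so each $gT$ is a simple object of $\Ind^{-1}(\WW)$ (simplicity is preserved by the auto-equivalence $g\cdot-$ of Remark \ref{rem:auto}). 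Moreover, $\Res S$ itself lies in $\Ind^{-1}(\WW)$, since $\Ind\Res S\cong k[G/N]\otimes_k S\in\WW$ by Proposition \ref{prop:indres} and the stability hypothesis on $\WW$; hence the epimorphism above is an epimorphism inside $\Ind^{-1}(\WW)$. Finally, Proposition \ref{prop:simple}(1) applied in the abelian category $\Ind^{-1}(\WW)$ to the semisimple object $\bigoplus_{g}gT$ guarantees that this epimorphism splits, so $\Res S$ is a direct summand of $\bigoplus_{g\in[G/N]}gT$, as desired.

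The main subtlety, and the only step that might look delicate, is ensuring that the simplicity/splitting arguments are carried out in the correct abelian category. Concretely, one must verify that (i) the image of $\widetilde{\iota}$ sits inside $\WW$ (so that simplicity of $S$ in $\WW$ can be invoked), and (ii) the target $\Res S$ of the Mackey-induced epimorphism actually belongs to $\Ind^{-1}(\WW)$ (so that Proposition \ref{prop:simple}(1) can be applied in $\Ind^{-1}(\WW)$). Both follow cleanly from the wideness of $\WW$ and its stability under $k[G/N]\otimes_k-$, so no genuine obstacle arises beyond careful bookkeeping.
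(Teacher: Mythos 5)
Your proposal is correct and follows essentially the same route as the paper's proof: adjoint of the inclusion $T\hookrightarrow\Res S$ gives a nonzero morphism $\Ind T\to S$ in $\WW$, simplicity of $S$ makes it an epimorphism, and then restricting, invoking Mackey's formula, the $G$-invariance of $\Ind^{-1}(\WW)$, and Proposition \ref{prop:simple}(1) yields the splitting. The extra bookkeeping you include (image computed in $\WW$, membership of $\Res S$ in $\Ind^{-1}(\WW)$ via Proposition \ref{prop:Res}) is exactly what the paper handles in the surrounding discussion, so there is no substantive difference.
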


\begin{proof}
We consider the $kG$-homomorphism $\varphi\colon\Ind T=kG\otimes_{kN}T\to S$ which sends $g\otimes t$ to $gt$. (This is the morphism corresponding to the inclusion $T\hookrightarrow\Res S$ via the adjunction $\Ind\dashv\Res$.) Then $\Ind T$ belongs to $\WW$ because $T\in\Ind^{-1}(\WW)$. Since $S$ is simple in $\WW$, the nonzero morphism $\varphi$ is epimorphic. Then we have the epimorphism $\Res\varphi\colon\Res\Ind T\to \Res S$. Also, by Proposition \ref{prop:Mackey} we have that $\Res\Ind T\iso \bigoplus_{g\in[G/N]}gT$.
Since $\Ind^{-1}(\WW)$ is a $G$-invariant wide subcategory by Proposition \ref{prop:ind-inverse},
we have that $\Res\Ind T\iso \bigoplus_{g\in[G/N]}gT$ is a semisimple object in $\Ind^{-1}(\WW)$ by Remark \ref{rem:auto}. By Proposition \ref{prop:simple} (1), the epimorhism $\Res\varphi\colon\Res\Ind T\to \Res S$ splits.
Therefore we obtain the desired result. 

\end{proof}

\begin{proof}[Proof of Theorem \ref{thm:main}]
(1) follows from Lemma \ref{lem:ss}. We show (2). For any $g\in G$ and any $i$, the module $gT_i$ is an indecomposable direct summand of $\Res S$ (see \cite[Lemma 2.5]{KK23-2}). Then there is an integer $1\leq i_g\leq n$ such that $gT_i\iso T_{i_g}$. Note that $g$ permutates $\{T_1,\ldots,T_n\}/\iso$. 

We show that if $gT_i\iso T_j$ holds, then we have $gT_i^{a_i}=T_j^{a_j}$. Since $g$ induces an autoequivalence of $\Ind^{-1}(\WW)$ by Remark \ref{rem:auto} and Lemma \ref{prop:G-inv-of-indinverse}, we have the demposition $\Res S=gT_1^{a_1}\oplus\cdots\oplus gT_n^{a_n}$ into simple objects in $\ind^{-1}(\WW)$. By Proposition \ref{prop:simple} (2), we have that $gT_i^{a_i}$ is the unique largest subobject of $\Res S$ isomorphic to a direct sum of copies of $gT_i\iso T_j$. Thus we have $gT_i^{a_i}=T_j^{a_j}$.

 For any $j$, there is $h\in G$ such that $T_j\iso h T_1$ by applying Lemma \ref{lem:ss} to the simple subobject $T_1$ of $\Res S$. By the previous paragraph, we have $hT_1^{a_1}=T_j^{a_j}$, hence (a) holds. By $T_j\iso h T_1$ and $hT_1^{a_1}=T_j^{a_j}$, we have $\dim_kT_j=\dim_kT_1$ and $a_1=a_j$, that is, (b) holds. 
\end{proof}

Note that Theorem \ref{thm:main} recovers the ordinary Clifford's theorem by taking $\WW$ as $\mod kG$ suitably. We consider the case that the index of $N$ in $G$ is a power of $p$.
\begin{lemma}\label{lem:p-group}
If $G/N$ is a $p$-group, then every wide subcategory of $\mod kG$ is stable under $k[G/N]\otimes_k-$. 
\end{lemma}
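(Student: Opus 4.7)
The plan is to show that $k[G/N]\otimes_k M$ is an iterated self-extension of $M$ whenever $M\in\mod kG$, so that extension-closure of $\WW$ forces $k[G/N]\otimes_k M\in\WW$.

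First I would exploit that $G/N$ is a $p$-group in characteristic $p$: the group algebra $k[G/N]$ is then a local (in fact, basic connected) algebra whose only simple module is the trivial module $k_{G/N}$. Consequently the regular left $k[G/N]$-module admits a composition series
\[
0=F_0\subset F_1\subset\cdots\subset F_n=k[G/N]
\]
with $F_i/F_{i-1}\iso k_{G/N}$ for every $i$. Pulling back along the canonical surjection $G\twoheadrightarrow G/N$, this becomes a filtration of $k[G/N]$ by $kG$-submodules with each quotient isomorphic to the trivial $kG$-module $k_G$.

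Next, for any $M\in\mod kG$, the functor $-\otimes_k M$ (with diagonal $G$-action) is exact, since $k$ is a field. Applying it to the filtration above yields
\[
0=F_0\otimes_k M\subset F_1\otimes_k M\subset\cdots\subset F_n\otimes_k M=k[G/N]\otimes_k M,
\]
with successive quotients $(F_i/F_{i-1})\otimes_k M\iso k_G\otimes_k M\iso M$. Hence $k[G/N]\otimes_k M$ is an iterated extension of $n$ copies of $M$.

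Finally, if $M$ belongs to a wide subcategory $\WW$ (in particular closed under extensions), then a straightforward induction on $i$ shows $F_i\otimes_k M\in\WW$ for all $i$, so $k[G/N]\otimes_k M=F_n\otimes_k M\in\WW$. There is no real obstacle here; the only thing to double-check is that the $kG$-module structure on $k[G/N]$ is the one inflated from the regular $k[G/N]$-module via $G\twoheadrightarrow G/N$, which is precisely the structure appearing in Proposition \ref{prop:indres}.
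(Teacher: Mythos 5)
Your proposal is correct and follows essentially the same argument as the paper: filter $k[G/N]$ by trivial modules (using that the $p$-group algebra $k[G/N]$ has only the trivial simple), apply the exact functor $-\otimes_k M$, and conclude by closure of $\WW$ under extensions. The extra detail you give on why the filtration exists and on the inflated $kG$-structure is a fine elaboration of the same proof.
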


\begin{proof}
Let $\WW$ be a wide subcategory of $\mod kG$ and $X$ an object in $\WW$. Since $G/N$ is a $p$-group, $k[G/N]$ has a filtration by trivial modules $k_{G/N}$. Then $k[G/N]\otimes_kX$ has a filtration by copies of $X$ since $-\otimes_kX$ is an exact functor and $k_{G/N}\otimes_k X\iso X$. Thus $k[G/N]\otimes_k X$ belongs to $\WW$ since $\WW$ is closed under extensions. 
\end{proof}

\begin{corollary}\label{p-group}
If $G/N$ is a $p$-group, then Clifford's theorem holds for any brick in $\mod kG$. 
\end{corollary}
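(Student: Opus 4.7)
The plan is to combine Lemma \ref{lem:p-group} with Theorem \ref{thm:main}, reducing the problem to exhibiting each brick as a simple object in a suitable wide subcategory. Since a single brick $S$ is, by definition, a semibrick (viewed as the singleton set of isoclasses $\{S\}$), Theorem \ref{thm:rin} immediately supplies a wide subcategory $\WW := \Filt\{S\}$ of $\mod kG$ whose unique simple object is $S$.

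Given this, the argument proceeds in three short steps. First, produce $\WW = \Filt\{S\}$ as above and note that $S\in\simp\WW$. Second, invoke Lemma \ref{lem:p-group}: since $G/N$ is a $p$-group, every wide subcategory of $\mod kG$ is stable under $k[G/N]\otimes_k -$, so in particular $\WW$ is such. Third, apply Theorem \ref{thm:main} to the wide subcategory $\WW$ and its simple object $S$; this yields both the semibrick conclusion for $\Res S$ and the transitivity and dimension/multiplicity statements in part (2).

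There is no real obstacle here beyond noticing the correct choice of wide subcategory; the work has already been done in Theorems \ref{thm:rin} and \ref{thm:main} together with Lemma \ref{lem:p-group}. The only point one might wish to verify explicitly is that $S$ is indeed simple as an object of $\Filt\{S\}$, which is immediate from the description of the bijection in Theorem \ref{thm:rin} (namely $\simp(\Filt\SS) = \SS$ for any semibrick $\SS$).
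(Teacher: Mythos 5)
Your proposal is correct and matches the paper's own proof: both take $\WW=\Filt\{S\}$ via Theorem \ref{thm:rin}, note that Lemma \ref{lem:p-group} makes it stable under $k[G/N]\otimes_k-$, and then apply Theorem \ref{thm:main}. No further comment is needed.
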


\begin{proof}
Let $S$ be a brick in $\mod kG$. Then $S$ is a simple object in a wide subcategory $\Filt S$ of $\mod kG$ by Theorem \ref{thm:rin}. Then $\Filt S$ is stable under $k[G/N]\otimes_k-$ by Lemma \ref{lem:p-group}. Thus we can apply Theorem \ref{thm:main} to $S$. 
\end{proof}

\begin{remark}
Let $R$ be a commutative ring. As in the case of a simple module, it can be shown that for any brick $S$ in $\mod RG$, its annihilator ideal $\ann_R(S)$ is a maximal ideal of $R$ (see \cite[Chapter 1, Exercise 15]{W}). 
Thus it follows from Corollary \ref{p-group} that Clifford's theorem holds for $S$ if the field $R/\ann_R(S)$ has characteristic $p>0$ and the index of $N$ is a power of $p$. In particular, Clifford's theorem holds for any brick in $\mod RG$ when $R$ is a local ring whose residue field has characteristic $p>0$ and the index of $N$ is a power of $p$. 
\end{remark}

\begin{example}\label{example:a4ands4}
Let $G=\mathfrak{S}_4, N=A_4$ and $k$ an algebraically closed field of characteristic $2$.
Then $kG$ has two simple modules $k_{G}$ and $S_{2}$, and $kN$ has three simple modules $k_{N}, T_1$ and $T_2$.
Here we remark that the dimension of $S_2$ is two and those of the others are all one,
and that $gT_1\cong T_2$, where we denote a complete set of representatives of $G/N$ by $\{e, g\}$.
All bricks for $kG$ are as follows (see \cite[Example 3.11]{KK23-2}):
$$
k_{G},
S_2,
\begin{bmatrix} S_2\\ k_{G} \\ k_{G}\end{bmatrix},
\begin{bmatrix} k_{G}\\ S_2\end{bmatrix},
\begin{bmatrix} S_2 \\ k_{G}\end{bmatrix},
\begin{bmatrix} k_{G}\\ k_{G} \\ S_2\end{bmatrix}.$$
Then the respective restricted modules to $kN$ are as follows:
$$\Res k_G=k_N,
\Res S_2=T_1\oplus T_2,
\Res \begin{bmatrix} S_2\\ k_{G} \\ k_{G}\end{bmatrix}
=  \begin{bmatrix} T_1\\ k_N\end{bmatrix} \oplus \begin{bmatrix} T_2 \\ k_N\end{bmatrix}
= \begin{bmatrix} T_1\\ k_N\end{bmatrix} \oplus g\begin{bmatrix} T_1 \\ k_N\end{bmatrix},$$
$$
\Res\begin{bmatrix} k_{G}\\ S_2\end{bmatrix}
=
\begin{bmatrix} k_N \\ T_1\oplus T_2\end{bmatrix},
\Res\begin{bmatrix} S_2 \\k_{G}\end{bmatrix}
=
\begin{bmatrix} T_1\oplus T_2 \\ k_N\end{bmatrix},
\Res \begin{bmatrix} k_{G} \\ k_{G}\\ S_2 \end{bmatrix}
=  \begin{bmatrix} k_N\\ T_1\end{bmatrix} \oplus \begin{bmatrix} k_N \\ T_2\end{bmatrix}
= \begin{bmatrix} k_N\\ T_1\end{bmatrix} \oplus g\begin{bmatrix} k_N \\ T_1\end{bmatrix}.$$
\end{example}

\begin{remark}
The hypothesis that $G/N$ is a $p$-group in Corollary \ref{p-group} is essential.
For example, consider the case $G=\mathfrak{S}_4, N=A_4, N_1=\{(1), (12)(34), (13)(24), (14)(23) \}$ and $p=2$.
(In this case, the quotient group $N/N_1$ is a $p'$-group and $G/N_1$ is not neither a $p$-group nor $p'$-group.)
Then $\begin{bmatrix} k_{N} \\ T_2\end{bmatrix}$ is a brick for $kN$, but $\Res^N_{N_1} \begin{bmatrix} k_{N} \\ T_2\end{bmatrix}\cong\begin{bmatrix} k_{N_1} \\ k_{N_1}\end{bmatrix}$ is not a semibrick.
Moreover $\begin{bmatrix} k_{G} \\ S_2\end{bmatrix}$ is a brick for $kG$, but $\Res^G_{N_1} \begin{bmatrix} k_{G} \\ S_2\end{bmatrix}\cong\begin{bmatrix} k_{N_1} \\ k_{N_1}\oplus k_{N_1}\end{bmatrix}$ is not a semibrick.

\end{remark}

\begin{remark}\label{rem:other-example}
We will see another example of wide subcategories which Theorem \ref{thm:main} can be applied in Theorem \ref{thm:compati}.
\end{remark}

The following proposition gives a certain inverse of Theorem \ref{thm:main}.
\begin{proposition}\label{prop:converse}
Let $G$ be a finite group, $N$ a normal subgroup of $G$, and $V$ be an indecomposable $kG$-module.
We consider the following two conditions.
\begin{enumerate}
\item
There exists a wide subcategory of $\mod kG$ stable under $k[G/N]\otimes_k-$ having $V$ as a simple object.
\item
$\Res V$ is a semibrick.
\end{enumerate}
Then \textnormal{(1)} means \textnormal{(2)}. Moreover if $p$ does not divide the index of $N$ in $G$, then \textnormal{(2)} means \textnormal{(1)}.
\end{proposition}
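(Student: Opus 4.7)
The first implication $(1)\Rightarrow(2)$ is immediate from Theorem \ref{thm:main}(1): if $V$ is a simple object in such a wide subcategory $\WW$, then $\Res V$ is semisimple in $\Ind^{-1}(\WW)$, and in particular a semibrick in $\mod kN$.

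For the converse $(2)\Rightarrow(1)$ under the hypothesis $p \nmid [G:N]$, my plan is to construct the desired wide subcategory of $\mod kG$ by pulling back a natural wide subcategory of $\mod kN$ along $\Res$. Writing $\Res V = T_1^{a_1}\oplus\cdots\oplus T_n^{a_n}$, where $\{T_1,\ldots,T_n\}$ is a semibrick by assumption, I would first set
\[
\WW_N := \Filt\{T_1, \ldots, T_n\},
\]
which is a wide subcategory of $\mod kN$ by Theorem \ref{thm:rin}, and then define
\[
\WW_G := \Res^{-1}(\WW_N) = \{X \in \mod kG \mid \Res X \in \WW_N\}.
\]
Exactness of $\Res$ makes $\WW_G$ a wide subcategory of $\mod kG$, Proposition \ref{prop:G-inv-of-resinverse} gives stability under $k[G/N]\otimes_k -$, and $V$ belongs to $\WW_G$ by construction.

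The key step is to verify that $V$ is a \emph{simple} object of $\WW_G$. A nonzero subobject of $V$ in $\WW_G$ is a nonzero $kG$-submodule $V' \subseteq V$ with $\Res V' \in \WW_N$. Inside the abelian category $\WW_N$, the object $\Res V = \bigoplus_i T_i^{a_i}$ is a direct sum of simple objects (the $T_i$), hence semisimple, so $\Res V'$ must be a direct summand of $\Res V$ in $\WW_N$, and therefore also as a $kN$-module. Consequently, the short exact sequence $0 \to V' \to V \to V/V' \to 0$ of $kG$-modules splits after applying $\Res$. The hypothesis $p \nmid [G:N]$ enters decisively at this point: by the classical Higman/Maschke-type averaging over a set of coset representatives of $G/N$, any short exact sequence of $kG$-modules that splits over $kN$ already splits over $kG$. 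Since $V$ is indecomposable, this forces $V' = 0$ or $V' = V$, so $V$ is simple in $\WW_G$.

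The main obstacle is this final splitting step. The verifications that $\WW_G$ is wide, stable under $k[G/N]\otimes_k -$, and contains $V$ are essentially formal. What requires real input is promoting $V$ from merely an object of $\WW_G$ to a simple object, and this is exactly where the assumption on the index $[G:N]$ is indispensable; without it, there is no reason for a $kN$-splitting to descend to a $kG$-splitting, and a proper $kG$-submodule $V' \subsetneq V$ whose restriction lies in $\WW_N$ could obstruct simplicity of $V$ inside $\WW_G$.
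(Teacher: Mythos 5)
Your proof is correct and takes essentially the same route as the paper: pull back the wide subcategory $\Filt\{T_1,\ldots,T_n\}$ along $\Res$, use semisimplicity of $\Res V$ there to split any subobject after restriction to $kN$, and descend the splitting to $kG$ via the averaging over coset representatives that $p \nmid [G:N]$ permits (the paper simply writes the explicit map $\pi'=\frac{1}{(G:N)}\sum_{g\in[G/N]}g\pi g^{-1}$, which is the Higman/Maschke-type argument you invoke), with indecomposability of $V$ then forcing simplicity.
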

\begin{proof}
By Theorem \ref{thm:main}, (1) means (2). 
We show that (2) means (1) under the assumption that $p$ does not divide the index of $N$ in $G$.
Suppose that $\Res V \iso S_1^{a_1}\oplus \cdots \oplus S_n^{a_n}$ where $\{S_1,\ldots,S_n\}$ is a semibrick in $\mod kN$.
By Theorem \ref{thm:rin}, we have that $\WW:=\Filt\{S_1, \ldots, S_n\}$ is a wide subcategory of $\mod kN$ and $S_1, \ldots, S_n$ are simple objects in $\WW$. This implies that $\Res V$ is semisimple in $\WW$.
By Proposition \ref{prop:G-inv-of-resinverse}, we have that $\Res^{-1}(\WW)$ is a wide subcategory of $\mod kG$ which is stable under $k[G/N]\otimes_k -$.
Hence it is enough to show that $V$ is semisimple in $\Res^{-1}(\WW)$, which means that $V$ is the simple object in $\Res^{-1}(\WW)$ because $V$ is the indecomposable $kG$-module.
We consider an arbitrary injection $\iota: W \hookrightarrow V$ in $\Res^{-1}(\WW)$. We show that $\iota$ is a section.
The morphism $\Res\iota: \Res W \hookrightarrow \Res V$ is a split monomorphism by the dual of Proposition \ref{prop:simple} (1), that is, there exists a morphism $\pi :\Res V \rightarrow \Res W$ such that $\pi \circ \iota = \id_W$.
Here we consider the map $\pi':=\frac{1}{(G:N)}\sum_{g\in [G/N]}g\pi g^{-1}: V\rightarrow W$.
We can easily check that the map $\pi'$ is the $kG$-homomorphism and that $\pi'\circ \iota=\id_W$.
Therefore we have that $\iota$ is the split monomorphism in $\Res^{-1}(\WW)$, which concludes that $V$ is semisimple.
\end{proof}

\begin{remark}
In Proposition \ref{prop:converse}, the assumption that $p$ dose not divide the index of $N$ in $G$ is essential.
In fact, without the assumption it can happen that there does not exist a wide subcategory of $\mod kG$ such that $V$ is a simple object in $\WW$ even if $\Res V$ is a semibrick;
In the setting of Example \ref{example:a4ands4}, the $kG$-module $\begin{bmatrix}k_G\\k_G\end{bmatrix}$ is not a simple object in any wide subcategory of $\mod kG$ since it is not a brick, but $\Res^{G}_N\begin{bmatrix}k_G\\k_G\end{bmatrix}=k_N\oplus k_N$ and $\Res^{G}_{N_1}\begin{bmatrix}k_G\\k_G\end{bmatrix}=\Res^{N}_{N_1}(\Res^{G}_{N}\begin{bmatrix}k_G\\k_G\end{bmatrix})=k_{N_1}\oplus k_{N_1}$ are semibricks.
\end{remark}

At the end of this section, for any semibrick $\SS$ in $\mod kG$, we show that the restriction $\Res\SS$ is a semibrick in $\mod kN$ if $G/N$ is a $p$-group and give some compatibility with respect to $\Res$ and $\Ind$.
We prepare the following lemma.

\begin{lemma}\label{lem:vanish}
Let $X,Y$ be objects in $\mod kG$. Assume that $G/N$ is a $p$-group. If $\Hom_{kG}(X,Y)=0$ holds, then we have $\Hom_{kN}(\Res X,\Res Y)=0$. 
\end{lemma}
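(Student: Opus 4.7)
The plan is to rewrite the assertion using the $\Ind\dashv\Res$ adjunction so that everything takes place on the $kG$-side, where the $p$-group hypothesis can be exploited. By Proposition \ref{prop:Al}(3) we have the natural isomorphism
\[
\Hom_{kN}(\Res X,\Res Y)\iso \Hom_{kG}(X,\Ind\Res Y),
\]
and by Proposition \ref{prop:indres} the right-hand side equals $\Hom_{kG}(X,k[G/N]\otimes_k Y)$. Thus it suffices to show that $\Hom_{kG}(X,k[G/N]\otimes_k Y)=0$.

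Next I would exploit the assumption that $G/N$ is a $p$-group. In characteristic $p$, the group algebra $k[G/N]$ has $k_{G/N}$ as its unique simple module, so it admits a composition series (as a left $kG$-module, with $G$ acting through the quotient) whose every subquotient is isomorphic to $k_{G/N}$. Tensoring this filtration with $Y$ over $k$ is exact and yields a filtration of $k[G/N]\otimes_k Y$ whose subquotients are all isomorphic to $k_{G/N}\otimes_k Y\iso Y$ as $kG$-modules. Hence $k[G/N]\otimes_k Y$ lies in $\Filt\{Y\}$ inside $\mod kG$.

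Finally, apply Lemma \ref{lem:lemma-for-smc} with $i=0$: the hypothesis $\Hom_{kG}(X,Y)=\Ext^0_{kG}(X,Y)=0$ forces $\Ext^0_{kG}(X,M)=0$ for every $M\in\Filt\{Y\}$, in particular for $M=k[G/N]\otimes_k Y$. Combining with the adjunction isomorphism above gives $\Hom_{kN}(\Res X,\Res Y)=0$, as required.

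There is no real obstacle in this argument; the only place one has to be careful is in producing the filtration of $k[G/N]\otimes_k Y$ by copies of $Y$. This is essentially the same device already used in the proof of Lemma \ref{lem:p-group}, and one could alternatively argue inductively along a filtration of $k[G/N]$ by applying the left-exact functor $\Hom_{kG}(X,-)$ to the resulting short exact sequences, bypassing an explicit appeal to Lemma \ref{lem:lemma-for-smc}.
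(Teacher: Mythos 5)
Your proposal is correct and follows essentially the same route as the paper: rewrite $\Hom_{kN}(\Res X,\Res Y)$ via the adjunction (Proposition \ref{prop:Al}(3)) and Proposition \ref{prop:indres}, observe that $k[G/N]\otimes_k Y\in\Filt Y$ because $G/N$ is a $p$-group and $-\otimes_k Y$ is exact, and conclude by Lemma \ref{lem:lemma-for-smc} with $i=0$. No changes are needed.
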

\begin{proof}
By Proposition \ref{prop:Al} (3) and Proposition \ref{prop:indres}, we have the isomorphisms
\[
\Hom_{kN}(\Res X,\Res Y)\iso\Hom_{kG}(X,\Ind\Res Y)\iso\Hom_{kG}(X,k[G/N]\otimes_k Y).
\]
Since $G/N$ is a $p$-group, $k[G/N]$ belongs to $\Filt(k_G)$. Then $k[G/N]\otimes_k Y$ belongs to $\Filt Y$ since $-\otimes_kY$ is an exact functor. By $\Hom_{kG}(X,Y)=0$ and Lemma \ref{lem:lemma-for-smc}, we have $\Hom_{kG}(X,k[G/N]\otimes_k Y)=0$.
\end{proof}

For $M\in\mod kG$, we denote by $\Indec(M)$ the set of isoclasses of indecomposable direct summands of $M$. For a semibrick $\SS$ in $\mod kG$, by Lemma \ref{lem:vanish}, we can set
\[
\Res \SS=\bigsqcup_{S\in \SS}\Indec(\Res S).
\]

\begin{proposition}\label{prop:sbrick}
Let $\SS$ be a semibrick in $\mod kG$. If $G/N$ is a $p$-group, then $\Res\SS$ is a semibrick in $\mod kN$. 
\end{proposition}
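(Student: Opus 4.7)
The plan is to show the two defining conditions of a semibrick for the set $\Res\SS$ separately: (i) every element of $\Res\SS$ is a brick in $\mod kN$, and (ii) $\Hom_{kN}(T,T')=0$ for any two distinct isoclasses $T,T'$ in $\Res\SS$. The hypothesis that $G/N$ is a $p$-group will be invoked twice: once via Corollary \ref{p-group} to deal with summands coming from a single $\Res S$, and once via Lemma \ref{lem:vanish} to deal with summands coming from different $S,S'\in\SS$.

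First, I would fix $S\in\SS$. Since $\SS$ is a semibrick, $S$ is a brick in $\mod kG$. By Corollary \ref{p-group}, $\Res S$ is a semibrick in $\mod kN$; in particular each $T\in\Indec(\Res S)$ is a brick, and for any two distinct $T,T'\in\Indec(\Res S)$ we have $\Hom_{kN}(T,T')=0$. This settles condition (i) and takes care of case (ii) when $T$ and $T'$ lie in the image of the same $S$.

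Next I would handle the cross terms. For distinct $S,S'\in\SS$ we have $\Hom_{kG}(S,S')=0$ by the semibrick axiom, and Lemma \ref{lem:vanish} (which needs $G/N$ to be a $p$-group) upgrades this to $\Hom_{kN}(\Res S,\Res S')=0$. Consequently, for any $T\in\Indec(\Res S)$ and $T'\in\Indec(\Res S')$, composing with the split projection $\Res S\defl T$ and the split inclusion $T'\infl\Res S'$ forces $\Hom_{kN}(T,T')=0$. This proves (ii) in the remaining case, and moreover justifies the disjointness used in the definition $\Res\SS=\bigsqcup_{S\in\SS}\Indec(\Res S)$: any common indecomposable summand of $\Res S$ and $\Res S'$ would furnish a nonzero morphism $\Res S\to\Res S'$, contradicting Lemma \ref{lem:vanish}.

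There is no real obstacle here beyond bookkeeping, since the two key ingredients (Corollary \ref{p-group} and Lemma \ref{lem:vanish}) are already available and each uses the $p$-group hypothesis in an essential way. The only subtlety worth flagging explicitly is that $\Res\SS$ must be read as a set of isoclasses of $kN$-modules, and that the disjoint-union notation is meaningful precisely because Lemma \ref{lem:vanish} forbids the same indecomposable from appearing in $\Indec(\Res S)$ for two different bricks $S\in\SS$.
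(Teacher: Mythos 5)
Your proposal is correct and follows essentially the same route as the paper: Corollary \ref{p-group} handles the summands of a single $\Res S$, and Lemma \ref{lem:vanish} kills the cross terms $\Hom_{kN}(\Res S,\Res S')$ for distinct $S,S'\in\SS$ (the paper also uses Lemma \ref{lem:vanish} to justify the disjoint-union notation, just as you flag). Your extra bookkeeping with split projections and inclusions is fine but implicit in the paper's argument.
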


\begin{proof}
By Corollary \ref{p-group}, we have that $\Indec(\Res S)$ is a semibrick in $\mod kN$ for any $S\in\SS$. Therefore it is enough to show $\Hom_{kN}(\Res S,\Res S')=0$ for any $S\neq S'$ in $\SS$. This follows from Lemma \ref{lem:vanish}.
\end{proof}

For $\Lambda\in \{kN, kG\}$, we denote the set of semibricks in $\mod\Lambda$ by $\sbrick \Lambda$ and that of wide subcategories of $\mod kG$ by $\wide \Lambda$. 
We have that the well-defined map $\Ind^{-1}: \wide kG \rightarrow \wide kN$ by Proposition \ref{prop:ind-inverse}.
Moreover, under the assumption that $G/N$ is a $p$-group, we get the well-defined map $\Res: \sbrick kG\rightarrow \sbrick kN$ by Proposition \ref{prop:sbrick}.
On these two maps, we have the following.
\begin{proposition}
If $G/N$ is a $p$-group, then we have the following commutative diagram.
  \begin{equation*}
    \begin{tikzcd}
      \sbrick kG \rar["\Res"] \dar["\Filt"] & [2em] \sbrick kN \dar["\Filt"] \\
      \wide kG  \rar["\Ind^{-1}"] & \wide kN
    \end{tikzcd}
  \end{equation*}
\end{proposition}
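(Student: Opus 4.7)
The plan is to fix $\SS \in \sbrick kG$, set $\WW := \Filt \SS$ (so that $\SS = \simp \WW$ by Theorem \ref{thm:rin}), and note $\Ind^{-1}(\WW) \in \wide kN$ by Proposition \ref{prop:ind-inverse}. The commutativity of the diagram evaluated at $\SS$ amounts to the subcategory equality $\Filt(\Res \SS) = \Ind^{-1}(\WW)$, and I would reduce this to the equality of semibricks
\[
\Res \SS = \simp(\Ind^{-1}(\WW)) \qquad \text{inside } \mod kN,
\]
from which Theorem \ref{thm:rin} applied to $\Ind^{-1}(\WW)$ immediately yields $\Filt(\Res \SS) = \Filt(\simp(\Ind^{-1}(\WW))) = \Ind^{-1}(\WW)$.

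For the inclusion $\Res \SS \subseteq \simp(\Ind^{-1}(\WW))$, I would invoke our main theorem: since $G/N$ is a $p$-group, Lemma \ref{lem:p-group} shows $\WW$ is stable under $k[G/N] \otimes_k -$, so Theorem \ref{thm:main}(1) applies to each $S \in \SS$ (which is simple in $\WW$), telling us $\Res S$ is semisimple in $\Ind^{-1}(\WW)$. Each element of $\Indec(\Res S)$ is therefore a simple object of $\Ind^{-1}(\WW)$, giving the inclusion. For the reverse inclusion, I would take a simple object $T$ of $\Ind^{-1}(\WW)$ and exploit that $\Ind T$ is a nonzero object of $\WW = \Filt \SS$: the top factor of any $\SS$-filtration of $\Ind T$ yields a surjection $\Ind T \twoheadrightarrow S$ with $S \in \SS$, which by the adjunction (Proposition \ref{prop:Al}(2)) corresponds to a nonzero morphism $f \colon T \to \Res S$ in $\mod kN$. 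Here $\Res S$ lies in $\Ind^{-1}(\WW)$, since $\Ind \Res S \iso k[G/N] \otimes_k S$ is in $\WW$ by the stability of $\WW$. Inside the abelian category $\Ind^{-1}(\WW)$ the morphism $f$ is then a monomorphism (as $T$ is simple and $f \neq 0$), and $\Res S$ is semisimple in $\Ind^{-1}(\WW)$ by Theorem \ref{thm:main}(1), so $T$ is isomorphic to one of the simple summands of $\Res S$, identifying $T$ with an element of $\Indec(\Res S) \subseteq \Res \SS$.

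The step that needs the most care is the reconciliation of two notions of direct summand, namely summands of $\Res S$ computed inside $\Ind^{-1}(\WW)$ versus inside $\mod kN$, when I pass from a simple summand of $\Res S$ in $\Ind^{-1}(\WW)$ to an element of $\Indec(\Res S)$. This rests on the fact that $\Ind^{-1}(\WW)$ is a full wide, hence Karoubian, subcategory of $\mod kN$, so summand decompositions transfer faithfully across the inclusion $\Ind^{-1}(\WW) \hookrightarrow \mod kN$; in particular simple objects of $\Ind^{-1}(\WW)$ are automatically indecomposable as $kN$-modules, so they represent genuine elements of $\Indec(\Res S)$ and of $\Res \SS$ as defined before Proposition \ref{prop:sbrick}.
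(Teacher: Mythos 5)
Your argument is correct, but it takes a different route from the paper's proof. You prove the key equality $\Ind^{-1}(\Filt\SS)=\Filt(\Res\SS)$ by identifying the simple objects: you show $\Res\SS=\simp(\Ind^{-1}(\Filt\SS))$ — the forward inclusion via Theorem \ref{thm:main}(1) (applicable by Lemma \ref{lem:p-group}), the reverse via the adjunction $\Ind\dashv\Res$ and a Schur-type argument that essentially re-runs the idea of Lemma \ref{lem:ss} — and then invoke the Ringel bijection of Theorem \ref{thm:rin} to pass from equal semibricks to equal wide subcategories. The paper instead argues by a direct double inclusion of subcategories without touching simple objects: for $\Filt(\Res\SS)\se\Ind^{-1}(\Filt\SS)$ it suffices to check this on $\Res\SS$ by extension-closedness, and there $\Ind T$ is a direct summand of $\Ind\Res S\iso k[G/N]\otimes_k S\in\Filt\SS$; for the reverse, if $\Ind X\in\Filt\SS$ then $\Res\Ind X\in\Filt(\Res\SS)$ by exactness of $\Res$, and $X$ is a summand of $\Res\Ind X$ by Mackey's formula (Proposition \ref{prop:Mackey}), with closure under summands of the wide subcategory $\Filt(\Res\SS)$ finishing the proof. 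The paper's route is shorter and more elementary, needing only closure properties, the tensor identity of Proposition \ref{prop:indres}, and Mackey, and not the full strength of Theorem \ref{thm:main} inside the proof; your route is heavier (it also needs Krull--Schmidt in $\mod kN$ for the summand reconciliation you rightly flag) but yields the sharper intermediate statement that $\Res\SS$ is precisely the semibrick corresponding to $\Ind^{-1}(\Filt\SS)$ under Ringel's bijection, which in particular re-derives Proposition \ref{prop:sbrick} rather than quoting it for well-definedness.
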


\begin{proof}
First, we remark that the vertical maps are well-defined by Theorem \ref{thm:rin}.
We show that $\Ind^{-1}(\Filt \SS)=\Filt (\Res \SS)$ for $\SS\in \sbrick kG$.
For the proof that $\Filt (\Res \SS)$ is included in $\Ind^{-1}(\Filt S)$, 
it is enough to show that $\Res\SS\se\Ind^{-1}(\Filt S)$ because $\Ind^{-1}(\Filt S)$ is closed under extensions by Proposition \ref{prop:ind-inverse}. For any $T\in \Res \SS$ there exists $S\in \SS$ such that $T$ is a direct summand of $\Res S$.
Then $\Ind T$ is a direct summand of $\Ind(\Res S)\iso k[G/N]\otimes_k S\in\Filt\SS$. Since $\Filt\SS$ is closed under direct summands, we have $\Ind T\in\Filt\SS$. 
For the inverse inclusion, take $X\in \Ind^{-1}(\Filt \SS)$ arbitrarily. By the definition, we have that $\Ind X\in \Filt\SS$.
Then $\Res\Ind X$ belongs to $\Res(\Filt\SS)\se\Filt(\Res\SS)$. By Proposition \ref{prop:Mackey}, we have that $X$ is a direct summand of $\Res(\Ind X)$. Thus we have $X\in\Filt(\Res\SS)$.
\end{proof}

\section{Restrictions of simple-minded collections}\label{sec:smc}
In this section, as an application of Theorem \ref{thm:main}, we consider restrictions of simple-minded collections for $kG$.
First, we recall the definition of the simple-minded collections in the derived category $D^b(\mod \Lambda)$ for a finite dimensional $k$-algebra $\Lambda$.

\begin{definition}
Let $\Lambda$ be a finite dimensional $k$-algebra.
A set $\XX$ of isoclasses of objects in $D^b(\mod \Lambda)$ is called a {\it simple-minded collection} in $D^b(\mod \Lambda)$ if the following conditions are satisfied:
\begin{enumerate}
\item for any $X\in \XX$, the endomorphism ring $\End_{D^b(\mod \Lambda)}(X)$ is a division $k$-algebra,
\item for any $X_1\neq X_2$ in $\XX$, it holds that $\Hom_{D^b(\mod \Lambda)}(X_1, X_2)=0$,
\item for any $X_1$ and $X_2$ in $\XX$ and any negative integer $n$, it holds that $\Hom_{D^b(\mod \Lambda)}(X_1, X_2[n])=0$,
\item
the smallest thick subcategory $\thick\XX$ of $D^b(\mod \Lambda)$ containing $\XX$ coincides with $D^b(\mod \Lambda)$.
\end{enumerate}
\end{definition}

For a simple-minded collection $\XX$ in $D^b(\mod \Lambda)$, we say that $\XX$ is {\it two-term} if the $i$-th cohomology $H^i(X)$ is $0$ for any $i\neq -1, 0$ and $X\in \XX$.
On two-term simple minded collections, we know the following.

\begin{proposition}[{\cite[Corollary 5.5]{BY} and \cite[Theorem 3.3]{As}}]\label{prop:two-term-smd}
Let $\XX$ be a two-term simple minded collection in $D^b(\mod \Lambda)$.
Then every $X \in \XX$ belongs either $\mod \Lambda$ or $(\mod \Lambda)[1]$ up to isomorphisms in $D^b(\mod \Lambda)$.
Moreover $\XX \cap \mod \Lambda$ and $\XX[-1] \cap \mod \Lambda$ are semibricks.
\end{proposition}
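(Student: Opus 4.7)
The plan is to first show that each $X \in \XX$ is concentrated in a single cohomological degree, and then deduce the semibrick part directly from the simple-minded collection axioms.

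For the first assertion, I would invoke the standard bijection (due to Koenig--Yang) between simple-minded collections in $D^b(\mod \Lambda)$ and bounded $t$-structures whose heart is a length category: under this bijection $\XX$ is exactly the set of isoclasses of simple objects in the heart $\AA$ of the associated $t$-structure. When $\XX$ is two-term, HRS-tilting theory identifies $\AA$ with the tilt of $\mod \Lambda$ along a torsion pair $(\TT, \FF)$ in $\mod \Lambda$, so that $\AA = \FF[1] * \TT$ with $\AA \cap \mod \Lambda = \TT$ and $\AA \cap (\mod \Lambda)[1] = \FF[1]$. Every object of $\AA$ then fits into a short exact sequence $0 \to F[1] \to X \to T \to 0$ in $\AA$ with $F \in \FF$ and $T \in \TT$; if $X$ is simple in $\AA$, this forces either $F = 0$ (whence $X \in \mod \Lambda$) or $T = 0$ (whence $X \in (\mod \Lambda)[1]$), giving the dichotomy in the statement.

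For the semibrick claim, pick distinct $X_1, X_2 \in \XX \cap \mod \Lambda$. Because both objects are concentrated in degree $0$, we have $\End_\Lambda(X_i) = \End_{D^b(\mod \Lambda)}(X_i)$, which is a division algebra by SMC axiom (1), and $\Hom_\Lambda(X_1, X_2) = \Hom_{D^b(\mod \Lambda)}(X_1, X_2) = 0$ by SMC axiom (2). Hence $\XX \cap \mod \Lambda$ is a semibrick in $\mod \Lambda$; the same argument applied after shifting handles $\XX[-1] \cap \mod \Lambda$.

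The main obstacle is the first part, which rests on the non-trivial $t$-structure machinery just outlined. A more self-contained alternative would start from $X \in \XX$ with both $A := H^{-1}(X)$ and $B := H^0(X)$ nonzero and attempt to contradict the division-algebra hypothesis on $\End_{D^b(\mod \Lambda)}(X)$: the truncation triangle $A[1] \to X \to B \to A[2]$ provides both a nonzero map $X \to B$ and a nonzero map $A[1] \to X$, which one would try to combine, using the vanishing $\Hom_{D^b(\mod \Lambda)}(\XX, \XX[n]) = 0$ for $n < 0$, to produce a non-invertible nonzero endomorphism of $X$. The delicate step here is controlling the connecting class in $\Ext^2_\Lambda(B, A)$, which is why the $t$-structure route feels cleaner and more conceptual.
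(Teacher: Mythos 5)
Your argument is correct, but note that the paper does not prove this proposition at all: it is quoted from the literature, namely \cite[Corollary 5.5]{BY} and \cite[Theorem 3.3]{As}, so there is no internal proof to compare with. Your route is essentially the standard one underlying those references: the Koenig--Yang bijection sends $\XX$ to a bounded $t$-structure with length heart $\AA$ whose simple objects are exactly $\XX$, and since $\AA$ is the extension closure of $\XX$ in $D^b(\mod\Lambda)$, the two-term hypothesis forces every object of $\AA$ to have cohomology concentrated in degrees $-1,0$; this is the one step you should state explicitly, because it is what makes the $t$-structure intermediate and hence lets you invoke the classification of intermediate $t$-structures as HRS tilts at torsion pairs $(\TT,\FF)$ with $\AA=\FF[1]*\TT$, $\AA\cap\mod\Lambda=\TT$ and $\AA\cap(\mod\Lambda)[1]=\FF[1]$. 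From there your simplicity argument (the canonical short exact sequence $0\to H^{-1}(X)[1]\to X\to H^0(X)\to 0$ in $\AA$ forces one end to vanish) and your direct deduction of the semibrick property from axioms (1) and (2), using $\Hom_{D^b(\mod\Lambda)}(M,M')=\Hom_\Lambda(M,M')$ for modules $M,M'$, are both complete and do not depend on the more delicate truncation-triangle alternative you sketch at the end, which you can safely drop. By contrast, Asai's proof of \cite[Theorem 3.3]{As} goes through $\tau$-tilting theory and left-finite semibricks rather than $t$-structures, so your argument matches the Br\"ustle--Yang side of the citation; either way the proposal is a valid proof of the quoted statement.
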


For a two-term simple-minded collection $\mathcal{X}$ in $D^b(\mod kG)$, we set 
\[
\Res\XX=\bigcup_{X\in\XX}\Indec(\Res X).
\]

Here is our second main theorem.

\begin{theorem}\label{thm:main2}
Let $k$ be a field of characteristic $p>0$, $G$ a finite group and $N$ a normal subgroup of $G$ of index $p^n$ in $G$ for some positive integer $n$. Then, for any two-term simple-minded collection $\mathcal{X}$ in $D^b(\mod kG)$, we have that $\Res \mathcal{X}$ is a two-term simple-minded collection in $D^b(\mod kN)$.
\end{theorem}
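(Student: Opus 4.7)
The plan is to decompose $\XX$ into its module and shifted-module parts and then verify the four defining conditions of a simple-minded collection for $\Res\XX$ separately. By Proposition \ref{prop:two-term-smd}, we may write $\XX=\XX^+\sqcup\XX^-[1]$, where $\XX^+:=\XX\cap\mod kG$ and $\XX^-:=\XX[-1]\cap\mod kG$ are semibricks in $\mod kG$. Since $G/N$ is a $p$-group, Proposition \ref{prop:sbrick} gives that $\Res\XX^+$ and $\Res\XX^-$ are semibricks in $\mod kN$. Consequently $\Res\XX=\Res\XX^+\sqcup(\Res\XX^-)[1]$ as a subset of $D^b(\mod kN)$ in which every object is concentrated in degree $0$ or in degree $-1$ and has division endomorphism ring; this gives the two-term property and condition (1).

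The $\Hom$-vanishing conditions (2) and (3) reduce, after applying the semibrick structures on $\Res\XX^\pm$ and the automatic vanishing of $\Ext^n$ between modules for $n<0$, to two statements in the ``mixed'' case where $A\in\Indec(\Res X_1)$ with $X_1\in\XX^+$ and $B\in\Indec(\Res X_2)$ with $X_2\in\XX^-$: namely $\Hom_{kN}(A,B)=0$ and $\Ext^1_{kN}(A,B)=0$. Since $X_1$ and $X_2[1]$ are distinct members of $\XX$, the simple-minded axioms for $\XX$ yield $\Hom_{kG}(X_1,X_2)=0$ and $\Ext^1_{kG}(X_1,X_2)=0$.

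To transfer these vanishings to $kN$, I use the adjunction isomorphisms $\Ext^i_{kN}(\Res X_1,B)\iso\Ext^i_{kG}(X_1,\Ind B)$ from Proposition \ref{prop:Al}. Since $G/N$ is a $p$-group, the argument of Lemma \ref{lem:p-group} shows that $\Ind\Res X_2\iso k[G/N]\otimes_k X_2$ belongs to $\Filt(X_2)$, and $\Ind B$ is a direct summand of this object. Hence Lemma \ref{lem:lemma-for-smc} applied to $X_2$ gives $\Ext^i_{kG}(X_1,\Ind\Res X_2)=0$ for $i=0,1$, and passing to the summand $\Ind B$ yields $\Ext^i_{kG}(X_1,\Ind B)=0$. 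Undoing the adjunction and restricting to the summand $A$ of $\Res X_1$ delivers $\Ext^i_{kN}(A,B)=0$.

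Finally, for condition (4) I use $\Ind$ to produce generators. Because $\Res$ is exact, commutes with shifts, preserves direct summands, and since $\Res X\in\thick(\Res\XX)$ for every $X\in\XX$ by construction, one has $\Res(\thick\XX)\se\thick(\Res\XX)$, so $\Res$ maps $D^b(\mod kG)=\thick\XX$ into $\thick(\Res\XX)$. For any $M\in\mod kN$, Proposition \ref{prop:Mackey} identifies $M$ as a direct summand of $\Res\Ind M\iso\bigoplus_{g\in[G/N]}gM$, which lies in $\thick(\Res\XX)$; hence $M\in\thick(\Res\XX)$, and so $\thick(\Res\XX)=D^b(\mod kN)$. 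The principal difficulty is the $\Ext^1$-vanishing in the mixed case of condition (2); this is exactly where the $p$-power index hypothesis is indispensable, via the containment $k[G/N]\otimes_k X_2\in\Filt(X_2)$ that activates Lemma \ref{lem:lemma-for-smc}.
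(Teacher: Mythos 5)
Your proof is correct and follows essentially the same route as the paper's: decompose $\XX$ into a semibrick part and a shifted semibrick part via Proposition \ref{prop:two-term-smd}, handle the within-degree conditions by the restriction-of-semibricks results, settle the mixed $\Hom$ and $\Ext^1$ vanishing via the adjunction of Proposition \ref{prop:Al} together with $k[G/N]\otimes_k X_2\in\Filt(X_2)$ and Lemma \ref{lem:lemma-for-smc} (which is exactly the content of the paper's Lemmas \ref{lem:vanish} and \ref{lem:vanish1}), and prove generation by Mackey's formula. The only cosmetic difference is that you rederive those two vanishing lemmas inline at the level of indecomposable summands instead of citing them.
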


To give a proof of the theorem, we use the following lemma.

\begin{lemma}\label{lem:vanish1}
Let $X,Y$ be objects in $\mod kG$. Assume that $G/N$ is a $p$-group. If $\Ext^1_{kG}(X,Y)=0$ holds, then we have $\Ext^1_{kN}(\Res X,\Res Y)=0$. 
\end{lemma}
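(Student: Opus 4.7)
The plan is to mimic the proof of Lemma \ref{lem:vanish}, replacing $\Hom$ by $\Ext^1$ throughout and using the $\Ext^1$-version of Frobenius reciprocity recorded in Proposition \ref{prop:Al}.

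First, I would apply Proposition \ref{prop:Al}(5) to convert the $\Ext^1$ over $kN$ into an $\Ext^1$ over $kG$: there is an isomorphism
\[
\Ext^1_{kN}(\Res X,\Res Y)\iso \Ext^1_{kG}(X,\Ind\Res Y).
\]
Then Proposition \ref{prop:indres} gives $\Ind\Res Y\iso k[G/N]\otimes_k Y$, so it suffices to show that $\Ext^1_{kG}(X, k[G/N]\otimes_k Y)=0$.

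Next, since $G/N$ is a $p$-group, the module $k[G/N]$ has a filtration whose successive quotients are all isomorphic to the trivial module $k_G$. Tensoring over $k$ with $Y$ is exact and sends $k_G$ to $Y$, so $k[G/N]\otimes_k Y$ acquires a filtration by copies of $Y$; in other words $k[G/N]\otimes_k Y\in\Filt Y$.

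Finally, I would invoke Lemma \ref{lem:lemma-for-smc} with $i=1$: the hypothesis $\Ext^1_{kG}(X,Y)=0$ implies $\Ext^1_{kG}(X,\Filt Y)=0$, and in particular the Ext-group for $k[G/N]\otimes_k Y$ vanishes, finishing the proof. The argument is entirely parallel to that of Lemma \ref{lem:vanish}, with the only substantive ingredient being the $\Ext^1$-adjunction of Proposition \ref{prop:Al}(5); there is no real obstacle, since Lemma \ref{lem:lemma-for-smc} was stated for arbitrary $i\geq 0$ precisely to accommodate such extensions.
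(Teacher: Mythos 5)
Your proposal is correct and is exactly the argument the paper intends: the paper's proof of Lemma \ref{lem:vanish1} simply says to repeat the proof of Lemma \ref{lem:vanish} using Proposition \ref{prop:Al}, which is precisely your use of Proposition \ref{prop:Al}(5), Proposition \ref{prop:indres}, the filtration of $k[G/N]$ by trivial modules, and Lemma \ref{lem:lemma-for-smc} with $i=1$. No differences worth noting.
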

\begin{proof}
By Proposition \ref{prop:Al}, this can be shown by the same manner as Lemma \ref{lem:vanish}.
\end{proof}

\begin{proof}[Proof of Theorem \ref{thm:main2}]
We can write $\XX$ as $\{ X_1, \ldots, X_l, Y_1[1], \ldots, Y_r[1] \}$ for semibricks $\{X_1,\ldots, X_l\}$ and $\{Y_1,\ldots, Y_r\}$ in $\mod kG$ by Proposition \ref{prop:two-term-smd}.

First, we need to show that $\End_{D^b(\mod kN)}(Z)$ is a division algebra for any $Z \in \Res \XX$ and that $\Hom_{D^b(\mod kN)}(Z, Z') = 0$ for any $Z\neq Z'$ in $\Res \XX$.
 For any indecomposable direct summands $Z$ and $Z'$ of $\Res X_i$ (or $\Res Y_j[1]$), we have that $$ \Hom_{kN}(Z, Z')= \begin{cases} \text{division algebra} & (Z\iso Z') \\ 0 &(Z \not\iso Z')\end{cases} $$ because $\Res X_i$ (or $\Res Y_j$) is a semibrick by Corollary \ref{p-group}.
Also for distinct $i$ and $i'$, by Lemma \ref{lem:vanish} we have $\Hom_{kN}(\Res X_i, \Res X_{i'})=0$, which means that $\Hom_{D^b(\mod kN)}(Z, Z')=0$ for any indecomposable summands $Z$ of $\Res X_i$ and $Z'$ of $\Res X_{i'}$. Similarly, $\Hom_{D^b(\mod kN)}(Z[1], Z'[1])=0$ holds for any indecomposable summands $Z[1]$ of $\Res Y_j[1]$ and $Z'[1]$ of $\Res Y_{j'}[1]$ for distinct $j$ and $j'$.
Also for $X_i$ and $Y_j$, it is clear that $\Hom_{D^b(\mod kN)}(\Res Y_j[1], \Res X_i)=0$. Moreover we have $$ \Hom_{D^b(\mod kN)}(\Res X_i, \Res Y_j[1]) \iso \Ext_{kN}^1(\Res X_i, \Res Y_j)=0, $$
by $\Ext_{kG}^1(X_i,Y_j)\iso\Hom_{D^b(\mod kG)}(X_i,Y_j[1])=0$ and Lemma \ref{lem:vanish1}. 
Therefore we have that $\End_{D^b(\mod kN)}(Z)$ is a division algebra for any $Z \in \Res \XX$ and $\Hom_{D^b(\mod kN)}(Z, Z') = 0$ for any pairwise non-isomorphic $Z, Z' \in \Res \XX$.

Next, we show that $\Hom_{D^b(\mod kN)}(Z, Z[n]) =0$ for any $Z, Z' \in \Res \XX$ and any negative integer $n$. It is enough to show that $\Hom_{D^b(\mod kN)}(\Res X_i, \Res Y_j) =0$ for any $1 \leq i \leq l$ and $1 \leq j \leq r$. Since $\XX$ is the simple-minded collection, it holds that $\Hom_{kG}(X_i, Y_j)=\Hom_{D^b(\mod kG)}(X_i, Y_j)=0$. 
Hence, by Lemma \ref{lem:vanish}, we have $\Hom_{D^b(\mod kN)}(\Res X_i, \Res Y_j)= \Hom_{kN}(\Res X_i, \Res Y_j)=0$.

Finally, we show that the thick subcategory $\thick (\Res \XX)$ coincides with $D^b(\mod kN)$. It is enough to show $\mod kN\subseteq\thick (\Res \XX)$. Let $M$ be an object in $\mod kN$. Since $\XX$ is the simple-minded collection in $D^b(\mod kG)$, we have $\mod kG\subseteq\thick \XX$. Therefore $\Ind M$ belongs to $\thick\XX$. Then we have $\Res\Ind M\in\Res(\thick\XX)\se\thick(\Res\XX)$. Thus we have that $M \in \thick (\Res \XX)$ because $\thick (\Res \XX)$ is closed under taking direct summands and $M$ is a direct summand of $\Res\Ind M$ by Proposition \ref{prop:Mackey}.
\end{proof}

\appendix
\section{Wide subcategories for normal subgroups}\label{sec:wide-subcategory}

In this section, we collect some results related to the operations $\Ind^{-1}$ and $\Res^{-1}$ considered in Section \ref{sec:Clifford}. 
At the end of this section, we give Theorem \ref{thm:compati} as an example of wide subcategories which can be applied to Theorem \ref{thm:main} as stated in Remark \ref{rem:other-example}

In the rest of this paper, we use the following notation. Let $\CC$ be a subcategory of $\mod kG$.
\begin{itemize}
    \item $\add \CC =\{X\in \mod kG\mid X \ \text{is a direct summand of some} \ C\in \CC \}$.
    \item $\Fac \CC =\{X\in \mod kG\mid \text{there exists an epimorphism}\ C \rightarrow X \ \text{for some}\ C\in \CC \}$.
    \item ${}^\perp\CC=\{X\in\mod kG\mid\Hom_{kG}(X,C)=0 \ \text{for any} \ C\in\CC\}$.
    \item $ \CC^{\perp}=\{X\in\mod kG\mid\Hom_{kG}(C,X)=0 \ \text{for any} \ C\in\CC\}$.
\end{itemize}

We mainly deal with torsion classes in this section.

\begin{definition}\label{def:tors}
  Let $\CC$ be a subcategory of $\mod\Lambda$.
  \begin{enumerate}
    \item $\CC$ is \emph{closed under quotients in $\mod\Lambda$} if, for every object $C \in \CC$, any quotient of $C$ in $\mod\Lambda$ belongs to $\CC$.
    \item $\CC$ is a \emph{torsion class in $\mod\Lambda$} if $\CC$ is closed under extensions and quotients in $\mod\Lambda$.
  \end{enumerate}
\end{definition}

First, we give some results on subcategories of $\mod kG$ and the ones of $\mod kN$.

\begin{proposition}\label{prop:Gchar}
Let $\CC$ be a subcategory of $\mod kN$ closed under direct summands. Then $\CC$ is $G$-invariant if and only if $\CC=\Ind^{-1}(\Res^{-1}(\CC))$ holds. 
\end{proposition}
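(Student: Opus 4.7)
The plan is to prove both directions using Mackey's decomposition formula (Proposition \ref{prop:Mackey}) to compute $\Res\Ind$ explicitly.

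For the ``if'' direction, suppose $\CC=\Ind^{-1}(\Res^{-1}(\CC))$. By Lemma \ref{prop:G-inv-of-indinverse}, applied to the subcategory $\Res^{-1}(\CC)$ of $\mod kG$, the subcategory $\Ind^{-1}(\Res^{-1}(\CC))$ of $\mod kN$ is $G$-invariant. Hence $\CC$ is $G$-invariant.

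For the ``only if'' direction, suppose $\CC$ is $G$-invariant. Unwinding definitions, we have
\[
\Ind^{-1}(\Res^{-1}(\CC))=\{X\in\mod kN\mid\Res\Ind X\in\CC\}.
\]
By Proposition \ref{prop:Mackey}, $\Res\Ind X\iso\bigoplus_{g\in[G/N]}gX$, where the sum is finite because $[G:N]<\infty$. I would establish the two inclusions separately. For the inclusion $\CC\se\Ind^{-1}(\Res^{-1}(\CC))$, take $X\in\CC$; then $gX\in\CC$ for each $g\in G$ by $G$-invariance, and since $\CC$ is additive (full additive subcategory by the paper's conventions), the finite direct sum $\bigoplus_{g\in[G/N]}gX$ belongs to $\CC$, so $X\in\Ind^{-1}(\Res^{-1}(\CC))$. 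For the reverse inclusion, take $X\in\Ind^{-1}(\Res^{-1}(\CC))$, so $\bigoplus_{g\in[G/N]}gX\in\CC$. Choosing the identity coset representative $e$, note $eX\iso X$ as a $kN$-module, so $X$ is isomorphic to a direct summand of $\bigoplus_{g\in[G/N]}gX$. Since $\CC$ is closed under direct summands by hypothesis, $X\in\CC$.

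There is no real obstacle here; the statement is essentially a direct computation once Mackey's formula is applied, with the only subtle points being (i) to invoke the additivity of $\CC$ for the forward inclusion (which relies on $[G:N]$ being finite) and (ii) to recognise that the identity coset gives $eX\iso X$, which together with closure under direct summands yields the reverse inclusion.
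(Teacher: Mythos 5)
Your proof is correct and follows essentially the same route as the paper: the ``if'' direction via Lemma \ref{prop:G-inv-of-indinverse} applied to $\Res^{-1}(\CC)$, and the ``only if'' direction by using Mackey's formula (Proposition \ref{prop:Mackey}) to see that $X\in\CC$ iff $\Res\Ind X\iso\bigoplus_{g\in[G/N]}gX\in\CC$, using $G$-invariance with additivity in one direction and closure under direct summands in the other. You merely spell out the details that the paper leaves implicit.
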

\begin{proof}
By Proposition \ref{prop:G-inv-of-indinverse}, we only show the only if part. Suppose that $\CC$ is $G$-invariant. Let $X$ be an object in $\mod kN$. Then $\CC$ contains $X$ if and only if so does $\Res(\Ind X)$ by Proposition \ref{prop:Mackey}. Thus we obtain the desired result. 
\end{proof}

\begin{lemma}\label{lem:indres}
Let $X$ be an object in $\mod kG$. Then there is an exact sequence
\[
 (k[G/N]\otimes_k X)^m\to k[G/N]\otimes_k X \to X\to 0
\]
in $\mod kG$ for some $m\in\Z_{\geq 0}$.
\end{lemma}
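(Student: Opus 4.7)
The plan is to take the $kG$-linear augmentation $\varepsilon\colon k[G/N]\to k_G$ given by $gN\mapsto 1$ as the starting point. Writing $I=\ker\varepsilon$ for the augmentation ideal and tensoring the short exact sequence $0\to I\to k[G/N]\to k_G\to 0$ with $X$ over the field $k$ (which is exact and compatible with the diagonal $kG$-action) yields
\[
0 \to I\otimes_k X \to k[G/N]\otimes_k X \xrightarrow{\varepsilon\otimes \id_X} X \to 0,
\]
so the task reduces to realizing $I\otimes_k X$ as the image of a $kG$-linear map from some $(k[G/N]\otimes_k X)^m$.

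To this end I will first produce a $kG$-linear surjection $k[G/N]^{[G:N]}\twoheadrightarrow I$ and then tensor it with $X$. Define
\[
\mu\colon k[G/N]\otimes_k k[G/N]\longrightarrow k[G/N],\qquad u\otimes v \longmapsto \varepsilon(v)u-\varepsilon(u)v,
\]
which on basis elements sends $aN\otimes bN$ to $aN-bN$; its $G$-equivariance under the diagonal action is immediate from $\varepsilon$ being $kG$-linear, and the identity $\mu(u\otimes N)=u$ for $u\in I$ shows $\im\mu = I$. To identify the source, Proposition \ref{prop:indres} gives $k[G/N]\otimes_k k[G/N]\iso\Ind\Res k[G/N]$. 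Since $k[G/N] = \Ind k_N$, Mackey's formula (Proposition \ref{prop:Mackey}) gives $\Res k[G/N]\iso\bigoplus_{g\in[G/N]}gk_N$, and the normality of $N$ forces each conjugate $gk_N$ to be the trivial $kN$-module; hence $\Res k[G/N]\iso k_N^{[G:N]}$, and therefore $k[G/N]\otimes_k k[G/N]\iso k[G/N]^{[G:N]}$ as $kG$-modules.

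Composing to obtain a $kG$-linear map $k[G/N]^{[G:N]}\to k[G/N]$ with image $I$, and then tensoring with $X$ over $k$ (using that $-\otimes_k X$ distributes over direct sums), produces a $kG$-linear map $(k[G/N]\otimes_k X)^{[G:N]}\to k[G/N]\otimes_k X$ with image $I\otimes_k X=\ker(\varepsilon\otimes\id_X)$, which proves the lemma with $m=[G:N]$. The main point requiring care is the Mackey-based identification $k[G/N]\otimes_k k[G/N]\iso k[G/N]^{[G:N]}$, where normality of $N$ is essential in order that every conjugate $gk_N$ again be the trivial $kN$-module; the remaining verifications (well-definedness and equivariance of $\mu$, and the computation $\im\mu=I$) are short direct checks.
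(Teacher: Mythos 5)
Your proof is correct, and its overall skeleton is the same as the paper's: both arguments produce an exact sequence $k[G/N]^m \to k[G/N] \to k_G \to 0$ of $kG$-modules and then apply the exact functor $-\otimes_k X$. The difference is in how that sequence is obtained. The paper simply takes any finite presentation of the trivial module $k_{G/N}$ over the finite-dimensional algebra $k[G/N]$ (which exists for free, since $k_{G/N}$ is cyclic and the augmentation ideal is finitely generated) and regards it in $\mod kG$ by inflation; this makes the proof two lines but gives no explicit $m$. You instead construct the presentation by hand: you isolate the augmentation sequence $0\to I\to k[G/N]\xrightarrow{\varepsilon} k_G\to 0$, cover $I$ by your $kG$-linear map $\mu$ on $k[G/N]\otimes_k k[G/N]$ (its image is contained in $I$ since $\varepsilon\circ\mu=0$, and contains $I$ by $\mu(u\otimes N)=u$), and identify the source with $k[G/N]^{[G:N]}$ via Proposition \ref{prop:indres} and Mackey's formula, where normality of $N$ is indeed what makes each conjugate $gk_N$ trivial. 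All steps check out, and you gain the explicit value $m=[G:N]$. Note, however, that the Mackey detour is avoidable: $I$ is spanned over $k$ (hence generated over $k[G/N]$) by the $[G:N]$ elements $aN-N$ with $a$ running over coset representatives, so sending the standard generators of the free module $k[G/N]^{[G:N]}$ to these elements already gives a $kG$-linear surjection onto $I$ with the same $m$; this is essentially the presentation the paper's one-line argument has in mind.
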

\begin{proof}
Take an exact sequence
\[
 k[G/N]^m\to k[G/N]\to k_{G/N}\to 0
\]
in $\mod k[G/N]$. 
We can regard this exact sequence as the one in $\mod kG$.
Applying an exact functor $-\otimes_k X$ to the above, we obtain the desired result.
\end{proof}

The next is a result on the stability under the functor $k[G/N]\otimes_k -$ of subcategories.
(See Definition \ref{def:tensor-stable} for the definition of the stability under the functor $k[G/N]\otimes_k -$.) 

\begin{proposition}\label{prop:Gchar2}
Let $\CC$ be a subcategory in $\mod kG$ closed under cokernels. Then $\CC$ is stable under $k[G/N]\otimes_k-$ if and only if $\CC=\Res^{-1}(\Ind^{-1}(\CC))$ holds. 
In particular, if $\CC$ is a wide subcategory or torsion class, then the statement holds.
\end{proposition}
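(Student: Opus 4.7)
The plan is to reformulate both sides of the claimed equality in terms of the functor $k[G/N]\otimes_k-$, using Proposition \ref{prop:indres}, and then pass between ``$X\in\CC$'' and ``$k[G/N]\otimes_k X\in\CC$'' via Lemma \ref{lem:indres}. Explicitly, by Proposition \ref{prop:indres} we may rewrite
\[
 \Res^{-1}(\Ind^{-1}(\CC))=\{X\in\mod kG \mid \Ind\Res X\in\CC\}=\{X\in\mod kG \mid k[G/N]\otimes_k X\in\CC\}.
\]
Thus the equality $\CC=\Res^{-1}(\Ind^{-1}(\CC))$ is equivalent to the biconditional ``$X\in\CC \iff k[G/N]\otimes_k X\in\CC$,'' and the stability condition is exactly the forward implication of this biconditional.

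For the ``only if'' direction I would take $X\in\CC$ and apply stability to get $k[G/N]\otimes_k X\in\CC$, giving the inclusion $\CC\subseteq \Res^{-1}(\Ind^{-1}(\CC))$. For the reverse inclusion, assume $k[G/N]\otimes_k X\in\CC$; by the standing convention that subcategories are additive, the direct sum $(k[G/N]\otimes_k X)^m$ also lies in $\CC$, and then Lemma \ref{lem:indres} provides an exact sequence
\[
 (k[G/N]\otimes_k X)^m \to k[G/N]\otimes_k X \to X \to 0
\]
exhibiting $X$ as the cokernel of a morphism between two objects of $\CC$. Since $\CC$ is closed under cokernels, this forces $X\in\CC$, giving $\Res^{-1}(\Ind^{-1}(\CC))\subseteq\CC$. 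The ``if'' direction is immediate: if $\CC=\Res^{-1}(\Ind^{-1}(\CC))$ and $X\in\CC$, then by the rewritten description $k[G/N]\otimes_k X\in\CC$, which is precisely stability.

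The main obstacle, such as it is, is recognizing that Lemma \ref{lem:indres} is exactly the tool needed to descend from $k[G/N]\otimes_k X\in\CC$ to $X\in\CC$; once that presentation of $X$ as a cokernel is in hand, the remainder is bookkeeping with the definitions of $\Ind^{-1}$ and $\Res^{-1}$.

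For the ``in particular'' statement, I would observe that wide subcategories are closed under cokernels by Definition \ref{def:basicdef}, while torsion classes are closed under quotients by Definition \ref{def:tors}, and every cokernel is a quotient; hence in both cases the hypothesis of the main equivalence is satisfied and the conclusion applies directly.
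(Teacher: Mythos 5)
Your proof is correct and follows essentially the same route as the paper: the nontrivial inclusion $\Res^{-1}(\Ind^{-1}(\CC))\subseteq\CC$ is obtained exactly as in the paper, via Proposition \ref{prop:indres}, Lemma \ref{lem:indres}, and closure under cokernels. The only cosmetic difference is that for the ``if'' direction the paper cites Proposition \ref{prop:G-inv-of-resinverse}, whereas you unwind the definitions directly, and your justification of the ``in particular'' clause via Definitions \ref{def:basicdef} and \ref{def:tors} is fine.
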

\begin{proof}

We remark that $\Ind(\Res X)\iso k[G/N]\otimes_k X$ for any $X\in \CC$ by Proposition \ref{prop:indres}.
The if part follows from Proposition \ref{prop:G-inv-of-resinverse}.
We show the only if part. Suppose that $\mathcal{C}$ is stable under $k[G/N]\otimes_k -$. Then it is easy to show $\CC\subseteq\Res^{-1}(\Ind^{-1}(\CC))$. Let $X$ be an object in $\Res^{-1}(\Ind^{-1}(\CC))$. Then $\Ind(\Res X)$ belongs to $\CC$. Since $\CC$ is closed under cokernels, we have $X\in\CC$ by Lemma \ref{lem:indres}. 

\end{proof}

We denote by $(\tors kN)^G$ the set of $G$-invariant torsion classes in $\mod kN$ and by $(\tors kG)^\star$ the set of torsion classes in $\mod kG$ stable under $k[G/N]\otimes_k-$.
Similarly, we denote by $(\wide kN)^G$ the set of $G$-invariant wide subcategories in $\mod kN$ and by $(\wide kG)^\star$ the set of wide subcategories in $\mod kG$ stable under $k[G/N]\otimes_k-$.
The following proposition explains the compatibility on above sets.

\begin{proposition}\label{prop:compati}
There exist isomorphisms
  \[
  \begin{tikzcd}[row sep = 0]
    (\tors kN)^G \rar[shift left, "\Res^{-1}"] & (\tors kG)^\star \lar[shift left, "\Ind^{-1}"] \\
  \end{tikzcd}
  \]
  and
    \[
  \begin{tikzcd}[row sep = 0]
    (\wide kN)^G \rar[shift left, "\Res^{-1}"] & (\wide kG)^\star \lar[shift left, "\Ind^{-1}"] \\
  \end{tikzcd}
  \]
as posets. 
\end{proposition}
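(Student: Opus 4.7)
The plan is to verify well-definedness of the four maps $\Res^{-1}$ and $\Ind^{-1}$ appearing in the statement, then show that they are mutually inverse by invoking Propositions \ref{prop:Gchar} and \ref{prop:Gchar2}, and finally observe that both $\Res^{-1}$ and $\Ind^{-1}$ are inclusion-preserving, which promotes the bijections to poset isomorphisms.

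For the well-definedness of $\Res^{-1}$, I would argue that given $\CC\in(\tors kN)^G$ (respectively $\CC\in(\wide kN)^G$), exactness of $\Res$ ensures that $\Res^{-1}(\CC)$ is a torsion class (respectively a wide subcategory) in $\mod kG$: if $f\colon X\to Y$ is a morphism in $\mod kG$ (or if $0\to X\to Y\to Z\to 0$ is a short exact sequence) whose relevant terms lie in $\Res^{-1}(\CC)$, then applying $\Res$ produces the corresponding datum in $\mod kN$ with terms in $\CC$, so the quotient, extension, kernel, or cokernel restricts into $\CC$. Stability of $\Res^{-1}(\CC)$ under $k[G/N]\otimes_k-$ is then exactly Proposition \ref{prop:G-inv-of-resinverse}, and in fact uses no $G$-invariance hypothesis on $\CC$. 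Dually, $\Ind^{-1}$ preserves torsion classes and wide subcategories by exactness of $\Ind$ (the wide case being Proposition \ref{prop:ind-inverse}), and the $G$-invariance of $\Ind^{-1}(\DD)$ is Lemma \ref{prop:G-inv-of-indinverse}.

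The mutual-inverse relations are then immediate from the two general criteria already established. Proposition \ref{prop:Gchar} supplies $\CC=\Ind^{-1}(\Res^{-1}(\CC))$ for every $G$-invariant subcategory closed under direct summands, and Proposition \ref{prop:Gchar2} supplies $\DD=\Res^{-1}(\Ind^{-1}(\DD))$ for every subcategory stable under $k[G/N]\otimes_k-$ that is closed under cokernels. Torsion classes satisfy both closure conditions, since direct summands and cokernels are quotients; wide subcategories satisfy them because they are abelian subcategories of $\mod\Lambda$, so closure under cokernels is part of the definition and closure under direct summands follows by applying closure under images to an idempotent projecting onto the chosen summand.

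I do not expect any substantive obstacle: the content has been packaged into Propositions \ref{prop:Gchar} and \ref{prop:Gchar2}, and the present statement is just their repackaging as poset isomorphisms on the specified classes of subcategories. The only point that requires a sentence to spell out is the closure of wide subcategories under direct summands, which is what legitimizes the invocation of Proposition \ref{prop:Gchar} in the wide case.
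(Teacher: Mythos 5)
Your proposal is correct and takes essentially the same route as the paper: well-definedness via Lemma \ref{prop:G-inv-of-indinverse}, Proposition \ref{prop:G-inv-of-resinverse} and exactness of $\Ind$ and $\Res$, mutual inverseness from Propositions \ref{prop:Gchar} and \ref{prop:Gchar2}, and order-preservation noted as immediate. The closure properties you spell out (torsion classes and wide subcategories being closed under direct summands and cokernels) are precisely what the paper leaves implicit when it invokes those two propositions.
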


\begin{proof}
By Lemma \ref{prop:G-inv-of-indinverse} and Proposition \ref{prop:G-inv-of-resinverse}, the maps $\Res^{-1}$ and $\Ind^{-1}$ are well-defined. Moreover these maps preserve poset structures clearly.  Also, Propositions \ref{prop:Gchar} and \ref{prop:Gchar2} show that these maps are bijections.
\end{proof}

The next is a result on the $G$-invariance of subcategories.
(See Definition \ref{def:G-stable} for the definition of the $G$-stability.)
For a subcategory $\CC$ of $\mod kN$, we denote by $\TTT(\CC)$ the smallest torsion class in $\mod kN$ containing $\CC$. For subcategories $\CC$ and $\DD$ of $\mod kN$, we denote by $\CC*\DD$ the subcategory of $\mod kN$ consisting of objects $X$ such that there is a short exact sequence
\[
0\to C\to X\to D\to 0
\]
in $\mod kN$ with $C\in\CC$ and $D\in\DD$.
For a torsion class $\TT$ in $\mod kN$, we set 
\[
\alpha\TT=\{X\in\TT\mid {}^\forall (f\colon T\to X) \ \text{in} \ \TT, \ker f\in \TT\}.
\]
In \cite{IT}, it is shown that $\alpha\TT$ is a wide subcategory of $\mod kN$.

\begin{proposition}\label{prop:Ginv}
Let $\CC$ and $\DD$ be $G$-invariant subcategories of $\mod kN$.
Then the following hold:
\begin{enumerate}
\item $\CC\cap\DD$ is $G$-invariant.
\item $\Fac\CC$ is $G$-invariant.
\item $\CC*\DD$ is $G$-invariant.
\item $\Filt\CC$ is $G$-invariant.
\item ${}^\perp\CC$ and $\CC^{\perp}$ are $G$-invariant.
\item $\TTT(\CC)$ is $G$-invariant.
\item If $\CC$ is a torsion class in $\mod kN$, then $\alpha\CC$ is $G$-stable.
\end{enumerate}
\end{proposition}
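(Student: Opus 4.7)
The plan is to exploit the fact that for each $g\in G$, the action $g\cdot -\colon\mod kN\to\mod kN$ is an exact auto-equivalence with inverse $g^{-1}\cdot -$ (this is Remark~\ref{rem:auto} applied with $\CC=\mod kN$). Consequently $g\cdot -$ preserves short exact sequences, kernels, cokernels, direct sums, and isomorphism classes, and there are natural isomorphisms $\Hom_{kN}(X,Y)\iso\Hom_{kN}(gX,gY)$. Once this is observed, each of (1)--(5) reduces to transporting the defining diagram through $g\cdot -$ and noting the $G$-invariance of $\CC$ (and $\DD$).

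Concretely, (1) is immediate. For (2), I would apply $g\cdot -$ to an epimorphism $C\twoheadrightarrow X$ with $C\in\CC$ to obtain an epimorphism $gC\twoheadrightarrow gX$ with $gC\in\CC$; for (3), the same treatment of a short exact sequence $0\to C\to X\to D\to 0$ yields $0\to gC\to gX\to gD\to 0$ with $gC\in\CC$ and $gD\in\DD$. Statement (4) follows from (3) by induction on the length of a filtration. For (5), the natural isomorphism $\Hom_{kN}(gX,C)\iso\Hom_{kN}(X,g^{-1}C)$, combined with the fact that $g^{-1}C$ ranges over $\CC$ exactly as $C$ does (by $G$-invariance), shows $X\in{}^\perp\CC$ iff $gX\in{}^\perp\CC$; the argument for $\CC^\perp$ is symmetric.

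For (6), I would observe that $g\cdot\TTT(\CC)$ is again a torsion class (by exactness of $g\cdot -$) that contains $g\cdot\CC=\CC$; the minimality of $\TTT(\CC)$ then yields $\TTT(\CC)\se g\cdot\TTT(\CC)$ for every $g\in G$, and replacing $g$ by $g^{-1}$ gives the reverse inclusion $g\cdot\TTT(\CC)\se\TTT(\CC)$. For (7), given $X\in\alpha\CC$ and a morphism $f\colon T\to gX$ in $\CC$, the translate $g^{-1}f\colon g^{-1}T\to X$ lies in $\CC$ by $G$-invariance of $\CC$, so $\ker(g^{-1}f)\in\CC$ by the definition of $\alpha\CC$; exactness of $g^{-1}\cdot -$ gives $\ker(g^{-1}f)\iso g^{-1}\ker f$, and a final application of $G$-invariance yields $\ker f\in\CC$. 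The only mildly delicate step is the two-way translation in (7); the remainder is a routine diagram chase and presents no serious obstacle.
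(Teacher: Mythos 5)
Your proof is correct and, for items (1)--(5) and (7), follows essentially the same route as the paper: everything is transported through the exact auto-equivalence $g\cdot-$ (with the adjoint-style isomorphism $\Hom_{kN}(gX,C)\iso\Hom_{kN}(X,g^{-1}C)$ for (5), and exactly the paper's two-way translation $f\mapsto g^{-1}f$, $\ker(g^{-1}f)\iso g^{-1}(\ker f)$ for (7)). The only genuine divergence is (6): the paper invokes the identity $\TTT(\CC)={}^\perp(\CC^{\perp})$ and then applies (5) twice, whereas you argue by minimality, noting that $g\cdot\TTT(\CC)$ is a torsion class containing $g\cdot\CC=\CC$, so $\TTT(\CC)\se g\cdot\TTT(\CC)$, and replacing $g$ by $g^{-1}$ gives the reverse inclusion. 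Your version is self-contained (it does not quote the double-perpendicular description of the smallest torsion class) and works verbatim for the smallest subcategory of any type preserved by exact auto-equivalences, while the paper's is a one-line reduction to (5); both are valid, and the minor point you should make explicit in (7) -- that $gX\in\CC$ in the first place, so that membership in $\alpha\CC$ makes sense -- is immediate from the $G$-invariance of $\CC$ and is likewise left implicit in the paper.
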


\begin{proof}
By Remark \ref{rem:auto}, an element $g\in G$ induces autoequivalences $g\cdot -$ of $\CC$ and $\DD$, hence it is easy to show (1), (2), (3) and (4). 

(5) We only show that ${}^\perp\CC$ is $G$-invariant because the other can be proved similarly. Let $X$ be an object in ${}^\perp\CC$. For any $g\in G$ and any $C\in\CC$, we have $\Hom_{kN}(gX,C)\iso\Hom_{kN}(X,g^{-1}C)=0$. Thus we have $gX\in {}^\perp\CC$.

(6) As is well known, it holds that $\TTT(\CC)={}^\perp(\CC^\perp)$. Hence the statement follows from (5).

(7) Let $X$ be an object in $\alpha\CC$. For any $g\in G$, we show $gX\in\alpha\CC$. For any morphism $f\colon C\to gX$ with $C\in\CC$, we only need to show $\ker f\in\CC$. Consider the exact sequence 
\[
0\to \ker f \to C \to gX
\]
in $\mod kN$. Since $g^{-1}\cdot-$ is an autoequivalence of $\mod kN$, it preserves kernels. Thus we have the exact sequence
\[
0\to g^{-1}(\ker f) \to g^{-1}C \to X.
\]
Since $\CC$ is $G$-invariant, $g^{-1}C$ is in $\CC$. Then $g^{-1}(\ker f)$ belongs to $\CC$ by $X\in\alpha\CC$. Since $\CC$ is $G$-invariant, we have $\ker f\iso g(g^{-1}(\ker f))\in\CC$ as desired.
\end{proof}

An object $X$ of $\mod kN$ is \emph{$G$-invariant} if $\add X$ is $G$-invariant. (see Definition \ref{def:G-stable}.) An object $Y$ in $\mod kG$ is \emph{stable under $k[G/N]\otimes_k-$} if $\add Y$ is stable under $k[G/N]\otimes_k-$. (see Definition \ref{def:tensor-stable}.)

For $kN$-modules $U$ and $V$, we write $U =_{\add} V$ if $\add U=\add V$. This relation is an equivalence relation. 
We denote the set of $\add$-equivalence classes of support $\tau$-tilting $\Lambda$-modules  by $\sttilt \Lambda$ for $\Lambda\in\{kN, kG\}$ (for the definition of support $\tau$-tilting modules, see \cite[DEFINITION 0.3]{AIR}).
Moreover we set
$$(\sttilt kN)^G:=\{M \in \sttilt kN\mid M\ \text{is} \ G\text{-invariant}\}$$
and
$$(\sttilt kG)^\star:=\{L \in \sttilt kG\mid S\otimes_k L \in \add L\ \text{for each simple}\ k[G/N]\text{-module}\ S \}.$$ 
Koshio and the first author showed the following result.
\begin{theorem}[{\cite[Theorem 1.3]{KK23-2}}]
The induction functor $\Ind_N^G: \mod kN \rightarrow \mod kG$ induces a poset isomorphism between $(\sttilt kN)^G$ and $(\sttilt kG)^\star$.
\end{theorem}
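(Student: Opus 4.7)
My plan is to reduce the claim to the poset isomorphism on torsion classes provided by Proposition \ref{prop:compati}, via the Adachi--Iyama--Reiten bijection between support $\tau$-tilting modules (viewed as $\add$-equivalence classes) and functorially finite torsion classes. Concretely, for both $\Lambda=kN$ and $\Lambda=kG$, I would use the assignment $M\mapsto\Fac M$ as a poset isomorphism $\sttilt\Lambda\cong\ftors\Lambda$, and then match $(\sttilt kN)^G$ and $(\sttilt kG)^\star$ with their torsion-class counterparts inside $(\tors kN)^G$ and $(\tors kG)^\star$.

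The first step is to translate the two side conditions. I would show $M\in\sttilt kN$ is $G$-invariant iff $\Fac M$ is $G$-invariant: the forward direction is immediate, while the converse uses that $g\cdot-$ is an exact autoequivalence of $\mod kN$ preserving $\tau$-rigidity, together with the Adachi--Iyama--Reiten description of $\add M$ as the subcategory of $\Ext$-projectives in $\Fac M$, forcing $gM\in\add M$. Symmetrically, I would show $L\in(\sttilt kG)^\star$ iff $\Fac L$ is stable under $k[G/N]\otimes_k-$: starting from $L\in(\sttilt kG)^\star$, a composition series of $k[G/N]$ by simple $k[G/N]$-modules $S_i$ produces a filtration of $k[G/N]\otimes_kL$ by objects of $\add L\se\Fac L$, so $k[G/N]\otimes_k L\in\Fac L$, and for general $X\in\Fac L$ a surjection $L^n\twoheadrightarrow X$ tensored with $k[G/N]$ places $k[G/N]\otimes_k X$ in $\Fac L$; the reverse implication uses once more the $\Ext$-projective description of $\add L$ together with the fact that every simple $k[G/N]$-module appears as a quotient of $k[G/N]$.

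Next I would show that the bijection of Proposition \ref{prop:compati} restricts to the functorially finite layer and is realised at the module level by induction. Given a $G$-invariant $M\in\sttilt kN$ with $\TT=\Fac M$, I would prove the crucial identity
\[
\Fac(\Ind M)=\Res^{-1}(\TT).
\]
For $\se$, applying $\Res$ to a surjection $(\Ind M)^n\twoheadrightarrow Y$ and using Proposition \ref{prop:Mackey} together with $G$-invariance gives $\Res\Ind M\iso\bigoplus_{g\in[G/N]}gM\in\add M\se\TT$. For the reverse inclusion, given $Y\in\Res^{-1}(\TT)$ and a surjection $\varphi\colon M^n\twoheadrightarrow\Res Y$, the $\Ind\dashv\Res$ adjunction lifts $\varphi$ to a composition $(\Ind M)^n\to\Ind\Res Y\to Y$ whose second arrow is the counit (surjective, since $1\otimes y\mapsto y$), producing the required surjection. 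This identity simultaneously shows that $\Res^{-1}(\TT)$ is functorially finite and that $M\mapsto\Ind M$ realises the bijection of Proposition \ref{prop:compati}, yielding the desired poset isomorphism $(\sttilt kN)^G\cong(\sttilt kG)^\star$.

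The main obstacle I anticipate is the translation step above, since matching the two side conditions at the module level with their torsion-class counterparts genuinely requires the $\Ext$-projective description of $\add M\se\Fac M$ and, on the $kG$-side, a filtration argument using the simple $k[G/N]$-modules. Once this translation is in place, everything else follows formally from Propositions \ref{prop:Mackey}, \ref{prop:indres}, and \ref{prop:compati} together with the exact adjunction $\Ind\dashv\Res$.
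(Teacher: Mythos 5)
This statement is quoted by the paper from \cite[Theorem 1.3]{KK23-2}; the paper itself gives no proof, so your attempt can only be judged on its own merits (and against the related Proposition \ref{prop:commute} in the appendix, whose proof your torsion-class identity essentially reproduces).

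Your reduction has a genuine gap at its centre: you never prove that $\Ind M$ is a support $\tau$-tilting $kG$-module, and this is precisely the hard content of the cited theorem. Your identity $\Fac(\Ind M)=\Res^{-1}(\Fac M)$ is correct (the $\supseteq$ direction via the counit and the $\subseteq$ direction via Proposition \ref{prop:Mackey} match the paper's proof of Proposition \ref{prop:commute}), and one can even push the adjunctions further: $\Ext^1_{kG}(\Ind M,Y)\iso\Ext^1_{kN}(M,\Res Y)=0$ for $Y\in\Res^{-1}(\Fac M)$ shows that $\Ind M$ is $\tau$-rigid and Ext-projective in this torsion class. But that only yields $\add \Ind M\se\add P(\Fac(\Ind M))$, where $P(-)$ denotes the direct sum of the Ext-projectives; the Adachi--Iyama--Reiten bijection sends the functorially finite torsion class $\Res^{-1}(\Fac M)$ to $P(\Res^{-1}(\Fac M))$, and nothing in your argument shows this coincides with $\add\Ind M$. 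The inference ``the torsion classes correspond, hence $\Ind$ realises the bijection'' fails in general: for $\Lambda=kA_2$ (quiver $1\to 2$) the projective-injective $P_1$ is $\tau$-rigid, $\Fac P_1$ is a functorially finite torsion class in which $P_1$ is Ext-projective, yet $P_1$ is not support $\tau$-tilting, since $P(\Fac P_1)=P_1\oplus S_1$. So the maximality (counting) step --- that $\Ind M$ together with the induced projective attains the required number of indecomposable summands, which in \cite{KK23-2} rests on a Clifford-theoretic analysis of indecomposable summands of induced modules --- is missing, and the same omission blocks your surjectivity argument: given $L\in(\sttilt kG)^\star$, producing a $G$-invariant $M$ with $\add\Ind M=\add L$ again needs exactly this fact (you would also need to know that $\Ind^{-1}(\Fac L)$ is functorially finite, which you do not address). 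Your two translation lemmas (the side conditions versus $G$-invariance, respectively $k[G/N]\otimes_k-$-stability, of $\Fac$) are plausible with the AIR facts you invoke, and the appeal to the Auslander--Smal\o{} characterisation of functorially finite torsion classes as those of the form $\Fac X$ is fine, but without the support-$\tau$-tilting property of $\Ind M$ the theorem as stated is not reached.
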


On the other hand, by \cite{AIR}, we know that there are one-to-one correspondence between $\sttilt \Lambda$ and $\ftors \Lambda$ for $\Lambda \in \{kN, kG\}$, where $\ftors \Lambda$ denotes the set of functorially finite torsion classes for $\Lambda$.
Now, we naturally wonder if there is some kind of correspondence between a subset of the set of torsion classes for $kN$ and that for $kG$ compatible with the above correspondence.
The following is a positive answer, where we put $(\ftors kN)^G:=\ftors kN \cap (\tors kN)^G$ and $(\ftors kG)^\star:= \ftors kG \cap (\tors kG)^\star$.

\begin{proposition}\label{prop:commute}
Let $X$ be a $G$-invariant support $\tau$-tilting $kN$-module. Then $\Fac(\Ind X)=\Res^{-1}(\Fac X)$ holds. In other words, there is the following commutative diagram:
  \begin{equation*}
    \begin{tikzcd}
      (\sttilt kN)^G \rar["\Ind"] \dar["\Fac"] & [2em] (\sttilt kG)^\star \dar["\Fac"] \\
      (\ftors kN)^G  \rar["\Res^{-1}"] & (\ftors kG)^\star 
    \end{tikzcd}
  \end{equation*}
\end{proposition}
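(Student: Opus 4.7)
The plan is to reduce the commutativity of the diagram to the single equality $\Fac(\Ind X)=\Res^{-1}(\Fac X)$ and prove each inclusion directly. Well-definedness of the four objects in the diagram is a short separate check that I will handle at the end.

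For the inclusion $\Fac(\Ind X)\subseteq\Res^{-1}(\Fac X)$, I would take $M\in\Fac(\Ind X)$ and a $kG$-epimorphism $(\Ind X)^n\twoheadrightarrow M$. Since $\Res$ is exact, this gives a $kN$-epimorphism $\Res(\Ind X)^n\twoheadrightarrow\Res M$. The Mackey formula (Proposition \ref{prop:Mackey}) yields $\Res(\Ind X)\iso\bigoplus_{g\in[G/N]}gX$, and the $G$-invariance of $X$ places each $gX$ in $\add X$. Hence $\Res M$ is a quotient of a finite direct sum of copies of $X$, so $\Res M\in\Fac X$.

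For the opposite inclusion, suppose $\Res M\in\Fac X$ and fix a $kN$-epimorphism $f\colon X^n\twoheadrightarrow\Res M$. By the adjunction $\Ind\dashv\Res$ (Proposition \ref{prop:Al}(2)), $f$ corresponds to the $kG$-morphism $\tilde f\colon(\Ind X)^n\to M$ sending $g\otimes x$ to $g\cdot f(x)$. Evaluating on simple tensors $1\otimes x$ shows that the image of $\tilde f$ already contains $f(X^n)=M$, so $\tilde f$ is epic and $M\in\Fac(\Ind X)$.

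To complete the diagram, I would observe that the top row is the isomorphism \cite[Theorem 1.3]{KK23-2} and that $\Fac$ of a support $\tau$-tilting module is always functorially finite. The left vertical map is well-defined because $\Fac$ preserves $G$-invariance by Proposition \ref{prop:Ginv}(2). For the right vertical map, given $L\in(\sttilt kG)^\star$, the $k[G/N]$-module $k[G/N]$ admits a composition series by simple $k[G/N]$-modules, and tensoring with $L$ yields a filtration of $k[G/N]\otimes_k L$ whose subquotients lie in $\add L\subseteq\Fac L$; closure of $\Fac L$ under extensions and quotients then shows that $\Fac L$ is stable under $k[G/N]\otimes_k-$. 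The only subtlety I expect is in the backward inclusion, where one must avoid a purely formal argument and instead compute the image of $\tilde f$ explicitly; once that observation is made, the remainder is routine.
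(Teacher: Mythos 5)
Your proposal is correct and follows essentially the same route as the paper: the forward inclusion via exactness of $\Res$, Mackey's formula and $G$-invariance, and the backward inclusion by inducing the surjection $X^n\twoheadrightarrow\Res M$ and composing with a surjection onto $M$. The only cosmetic difference is that you verify surjectivity of the counit $\Ind\Res M\to M$ by an explicit element computation, while the paper deduces the same surjection from Lemma \ref{lem:indres}; your treatment of the well-definedness checks (which the paper leaves as ``easily checked'') is also fine.
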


\begin{proof}
We can easily check that the vertical maps in the above diagram are well-defined.
Let $Y$ be an object in $\mod kG$. Suppose that $Y$ belongs to $\Fac(\Ind X)$. Then there is a surjection $\oplus\Ind X\to Y$. Applying $\Res$, we have a surjection $\oplus\Res(\Ind X)\to \Res Y$. Since $X$ is $G$-invariant, $\Res(\Ind X)$ belongs to $\add X$. Thus $\Res Y$ belongs to $\Fac X$.

Suppose that $Y$ belongs to $\Res^{-1}(\Fac X)$, that is, $\Res Y\in\Fac X$. Then there is a surjection $\oplus X\to \Res Y$. Applying $\Ind$, we have a surjection $\oplus\Ind X\to \Ind\Res Y$. Then, by Lemma \ref{lem:indres} there is a surjection $\Ind\Res Y \to Y$. Thus there is a surjection $\oplus\Ind X\to Y$. 
\end{proof}

There are some studies on torsion classes for finite dimensional algebras.
One of our interests is a wide interval of torsion classes in $\mod kG$.
In the rest of this section, we consider wide intervals of torsion classes in $\mod kG$ and those in $\mod kN$.
First, we recall the definition of wide intervals.

\begin{definition}[{see \cite[Definition 4.1]{AP}}]
Let $\UU$ and $\TT$ be torsion classes in $\mod kG$ satisfying $\UU\se\TT$. We call
\[
[\UU,\TT]=\{\CC\in\tors kG\mid\UU\se\CC\se\TT\}
\]
an \emph{interval in $\tors kG$}. We call $\HH_{[\UU,\TT]}=\TT\cap\UU^{\perp}$ the \emph{heart of $[\UU,\TT]$}. We call an interval $[\UU,\TT]$ a \emph{wide interval} if $\HH_{[\UU,\TT]}$ is a wide subcategory of $\mod kG$. 
\end{definition}

One motivation for considering the wide intervals is due to the following result.

\begin{proposition}[{see \cite[Proposition 6.1 and Proposition 6.3]{AP} or \cite[Proposition 3.3]{ES}}]
Any wide subcategory of $\mod kG$ is realized as a heart of some interval in $\tors kG$.
\end{proposition}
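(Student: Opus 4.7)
The plan is to construct, for a given wide subcategory $\WW$ of $\mod kG$, an explicit interval $[\UU, \TT]$ in $\tors kG$ whose heart is $\WW$. Writing $\WW = \Filt \SS$ with $\SS = \simp \WW$ by Theorem \ref{thm:rin}, I propose taking $\TT := \TTT(\WW)$ to be the smallest torsion class containing $\WW$, and $\UU := \TT \cap {}^\perp\WW$. Since both $\TT$ and ${}^\perp\WW$ are closed under extensions and quotients, $\UU$ is a torsion class, and it clearly sits inside $\TT$, so $[\UU, \TT]$ is a well-defined interval in $\tors kG$.

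The inclusion $\WW \subseteq \HH_{[\UU, \TT]} = \TT \cap \UU^\perp$ is immediate from the construction: $\WW \subseteq \TT$ by definition of $\TT$, and $\Hom_{kG}(U, W) = 0$ whenever $U \in \UU \subseteq {}^\perp\WW$ and $W \in \WW$.

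For the reverse inclusion, I would route the argument through the Ingalls--Thomas wide subcategory $\alpha\TT$ recalled just before Proposition \ref{prop:Ginv}, aiming at the chain $\WW \subseteq \alpha\TT \subseteq \HH_{[\UU, \TT]} \subseteq \WW$. The first containment reduces, via $\WW = \Filt \SS$ and the wideness of $\alpha\TT$, to checking $\SS \subseteq \alpha\TT$: given $S \in \SS$ and $f \colon T \to S$ with $T \in \TT$, the image $\im f$ is a subobject of $S$ in $\WW$, hence is $0$ or $S$ by simplicity of $S$ in $\WW$, and in both cases $\ker f \in \TT$ follows from the short exact sequence $0 \to \ker f \to T \to \im f \to 0$ together with closure of $\TT$ under extensions. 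The middle containment $\alpha\TT \subseteq \HH_{[\UU, \TT]}$ amounts to $\alpha\TT \subseteq \UU^\perp$: for $X \in \alpha\TT$ and $f \colon U \to X$ with $U \in \UU$, the $\alpha$-property gives $\ker f \in \TT$, hence $\im f \in \TT$, and since $\im f$ is a quotient of $U \in {}^\perp\WW$ we get $\im f \in \UU$; using $\WW \subseteq \alpha\TT$ to identify $\im f$ as a subobject of $X$ inside a wide ambient category then forces $\im f = 0$.

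The main obstacle will be closing the chain with $\HH_{[\UU, \TT]} \subseteq \WW$, equivalently $\alpha\TT \subseteq \WW$. My plan is to take $X \in \TT \cap \UU^\perp$ and induct on the length of a $\Fac \WW$-filtration of $X$, which exists because $\TT = \Filt(\Fac \WW)$. At each step, writing a filtration quotient as $W/K$ with $W \in \WW$ and $K \subseteq W$, one would identify $K$ as an object of $\UU$ using the condition $X \in \UU^\perp$ and the semibrick structure of $\SS$, and then use the wide-category structure of $\WW$ to force the quotient to lie in $\WW$. The subtle point is that $\WW$ is not closed under arbitrary subobjects in $\mod kG$, only inside itself, and $\TT$ and $\UU$ are not assumed functorially finite, so one cannot invoke canonical torsion approximations and must argue directly from the filtration description $\TT = \Filt(\Fac \WW)$ together with the perpendicular computation defining $\UU$.
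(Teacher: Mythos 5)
This proposition is quoted in the paper from \cite{AP} and \cite{ES} without an internal proof, so your proposal has to stand on its own. The interval you choose, $[\TTT(\WW)\cap{}^{\perp}\WW,\ \TTT(\WW)]$, is indeed the standard one from those references, and the easy half $\WW\subseteq\HH_{[\UU,\TT]}$ is fine. The problems are in the other inclusions. In the step $\SS\subseteq\alpha\TT$ you treat $\im f$ as a subobject of $S$ \emph{in} $\WW$; a priori it is only a submodule of $S$ in $\mod kG$ lying in $\TT$, and $\WW$ is not closed under submodules, so simplicity of $S$ in $\WW$ does not apply directly (this is repairable: a nonzero $Y\in\TT=\Filt(\Fac\WW)$ inside $S$ contains a nonzero image of some map $W\to S$ with $W\in\WW$, and such an image lies in $\WW$ because $\WW$ is closed under kernels and cokernels, so simplicity forces it to be $S$). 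The step that fails as written is the conclusion that ``$\ker f\in\TT$ follows from $0\to\ker f\to T\to\im f\to 0$ and closure of $\TT$ under extensions'': extension-closure produces the middle term from the two outer ones, and torsion classes are not closed under kernels of epimorphisms, so in the case $\im f=S$ nothing follows. That case is exactly the nontrivial content of the inclusion $\WW\subseteq\alpha(\TTT(\WW))$ underlying the cited results, and it requires an induction along a $\Fac\WW$-filtration of $T$, using that kernels of morphisms between objects of $\WW$ stay in $\WW$; your sketch omits this entirely.

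The two remaining inclusions are also not established. For $\alpha\TT\subseteq\UU^{\perp}$ you correctly reduce to showing that a nonzero submodule of $X$ lying in $\UU$ cannot exist, but the phrase ``inside a wide ambient category then forces $\im f=0$'' presupposes control over subobjects of $X\in\alpha\TT$ that one only has after knowing $\alpha\TT\subseteq\WW$, which is the very thing still missing; note also that ``$\HH_{[\UU,\TT]}\subseteq\WW$, equivalently $\alpha\TT\subseteq\WW$'' is not an equivalence at that stage, since only $\alpha\TT\subseteq\HH_{[\UU,\TT]}$ would be available. Finally, for the inclusion $\HH_{[\UU,\TT]}\subseteq\WW$, which you yourself flag as the main obstacle, the plan does not go through as stated: the hypothesis $X\in\UU^{\perp}$ constrains submodules of $X$, not the submodules $K\subseteq W$ appearing in filtration quotients $W/K$ of $X$, so ``identify $K$ as an object of $\UU$'' has no justification. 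In short, you have the right interval, but completing the verification amounts to reproving the results of \cite{AP} and \cite{ES} that the paper simply cites, and the proposal in its current form leaves precisely those key steps open.
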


\begin{proposition}\label{prop:2-3}
Let $[\UU,\TT]$ be an interval in $\tors kN$ and $\HH$ the heart of $[\UU,\TT]$. If two of $\UU, \TT$ and $\HH$ are $G$-invariant, then so is the third.  
\end{proposition}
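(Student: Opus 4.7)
The plan is to express each of $\HH$, $\TT$, and $\UU$ in terms of the other two using only operations that preserve $G$-invariance by Proposition \ref{prop:Ginv}. The three identities I would establish are:
\begin{enumerate}
\item $\HH=\TT\cap\UU^{\perp}$ (the definition of the heart),
\item $\TT=\UU*\HH$,
\item $\UU=\TT\cap {}^{\perp}\HH$.
\end{enumerate}

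Granted these, the three cases are immediate: if $\UU$ and $\TT$ are $G$-invariant, then (i) together with Proposition \ref{prop:Ginv} (5) and (1) shows $\HH$ is $G$-invariant. If $\UU$ and $\HH$ are $G$-invariant, then (ii) together with Proposition \ref{prop:Ginv} (3) shows $\TT$ is $G$-invariant. If $\TT$ and $\HH$ are $G$-invariant, then (iii) together with Proposition \ref{prop:Ginv} (5) and (1) shows $\UU$ is $G$-invariant.

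To establish (ii) and (iii), I would invoke the torsion pair $(\UU,\UU^{\perp})$ attached to the torsion class $\UU\se\mod kN$: since $\mod kN$ consists of modules of finite length, every $X\in\mod kN$ admits a canonical short exact sequence
\[
 0\to tX\to X\to X/tX\to 0
\]
with $tX\in\UU$ and $X/tX\in\UU^{\perp}$. If $X\in\TT$, then $tX\in\UU\se\TT$ and $X/tX\in\TT$ (using that $\TT$ is closed under quotients), so $X/tX\in\TT\cap\UU^{\perp}=\HH$; this gives $\TT\se\UU*\HH$. The reverse inclusion $\UU*\HH\se\TT$ follows since $\UU,\HH\se\TT$ and $\TT$ is extension-closed, yielding (ii). For (iii), the inclusion $\UU\se\TT\cap {}^{\perp}\HH$ is clear. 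Conversely, for $X\in\TT\cap {}^{\perp}\HH$, the same canonical sequence has $X/tX\in\HH$, and the surjection $X\twoheadrightarrow X/tX$ must vanish because $X\in {}^{\perp}\HH$; hence $X=tX\in\UU$.

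The main (very mild) obstacle is identity (ii); once the canonical torsion sequence with respect to $(\UU,\UU^{\perp})$ is in hand, everything else reduces to quoting the appropriate items of Proposition \ref{prop:Ginv}.
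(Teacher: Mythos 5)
Your proposal is correct and follows essentially the same route as the paper: the paper's proof also reduces the three cases to the identities $\HH=\TT\cap\UU^{\perp}$, $\TT=\UU*\HH$ and $\UU=\TT\cap{}^{\perp}\HH$ combined with Proposition \ref{prop:Ginv}. The only difference is that the paper quotes the latter two identities from \cite[Lemma 2.8]{ES}, whereas you verify them directly via the canonical torsion sequence for $(\UU,\UU^{\perp})$, which is a valid (and self-contained) substitute.
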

\begin{proof}
This follows easily from Proposition \ref{prop:Ginv} and the equations $
\TT=\UU*\HH$ and $\UU=\TT\cap {}^{\perp}\HH$, see \cite[Lemma 2.8]{ES}.
\end{proof}

\begin{lemma}\label{lem:resper}
Let $\TT$ be a $G$-invariant torsion class in $\mod kN$. Then we have $\Res^{-1}(\TT^{\perp})=(\Res^{-1}(\TT))^{\perp}$.
\end{lemma}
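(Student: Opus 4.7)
The plan is to prove the two inclusions separately. For the forward inclusion $\Res^{-1}(\TT^{\perp})\se(\Res^{-1}(\TT))^{\perp}$ no hypothesis on $\TT$ is needed: if $Y\in\Res^{-1}(\TT^{\perp})$ and $Z\in\Res^{-1}(\TT)$, then any $kG$-homomorphism $f\colon Z\to Y$ restricts to a $kN$-homomorphism $\Res f\colon\Res Z\to\Res Y$ which must vanish since $\Res Z\in\TT$ and $\Res Y\in\TT^{\perp}$. Because $\Res$ is faithful (it is just restriction of the underlying action), this forces $f=0$.

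For the reverse inclusion $(\Res^{-1}(\TT))^{\perp}\se\Res^{-1}(\TT^{\perp})$, the $G$-invariance of $\TT$ is essential. Take $Y\in(\Res^{-1}(\TT))^{\perp}$ and an arbitrary $T\in\TT$. The key step is to observe that $\Ind T$ belongs to $\Res^{-1}(\TT)$: by Mackey's decomposition (Proposition \ref{prop:Mackey}) we have $\Res\Ind T\iso\bigoplus_{g\in[G/N]}gT$, and since $\TT$ is $G$-invariant and closed under finite direct sums (as a torsion class is extension-closed), this module lies in $\TT$. Hence by hypothesis $\Hom_{kG}(\Ind T,Y)=0$, and applying the adjunction of Proposition \ref{prop:Al}(2) gives
\[
\Hom_{kN}(T,\Res Y)\iso\Hom_{kG}(\Ind T,Y)=0.
\]
Since $T\in\TT$ was arbitrary, $\Res Y\in\TT^{\perp}$, i.e. $Y\in\Res^{-1}(\TT^{\perp})$.

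The only (mild) obstacle is recognising that the reverse inclusion really does need $G$-invariance: it is precisely what guarantees that $\Ind T$ sits inside $\Res^{-1}(\TT)$ via Mackey, so that the adjointness argument can be applied. Once this is in place the proof reduces to the tensor-hom adjunction together with faithfulness of $\Res$.
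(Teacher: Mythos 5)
Your proof is correct and follows essentially the same route as the paper: the forward inclusion by restricting morphisms and faithfulness of $\Res$, and the reverse inclusion via the adjunction $\Hom_{kN}(T,\Res Y)\iso\Hom_{kG}(\Ind T,Y)$ after checking $\Ind T\in\Res^{-1}(\TT)$. The only difference is cosmetic: you verify $\Res\Ind T\in\TT$ directly from Mackey's formula and $G$-invariance, whereas the paper cites Proposition \ref{prop:Gchar}, whose proof is exactly that argument.
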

\begin{proof}
Let $X$ be an object in $\Res^{-1}(\TT^{\perp})$. For any $Y\in\Res^{-1}(\TT)$, we show $\Hom_{kG}(Y,X)=0$. Let $f\colon Y\to X$ be a morphism in $\mod kG$. Then $\Res f=0$ holds by $\Res X\in\TT^{\perp}$ and $\Res Y\in\TT$. Therefore we have $f=0$.

Conversely, let $X$ be an object in $(\Res^{-1}(\TT))^{\perp}$. We need to show $\Res X\in\TT^{\perp}$. Let $Y$ be an object in $\TT$. By Proposition \ref{prop:Gchar}, we have $\Ind Y\in\Res^{-1}(\TT)$. Then $\Hom_{kN}(Y,\Res X)\iso\Hom_{kG}(\Ind Y,X)=0$ holds by $X\in(\Res^{-1}(\TT))^{\perp}$. Thus $\Res X$ belongs to $\TT^{\perp}$.
\end{proof}

\begin{proposition}\label{prop:inv}
Let $\TT$ and $\UU$ be $G$-invariant torsion classes in $\mod kN$ satisfying $\UU\se\TT$. Then the following hold:
\begin{enumerate}
    \item $\Res^{-1}(\HH_{[\UU,\TT]})=\HH_{[\Res^{-1}(\UU),\Res^{-1}(\TT)]}$.
    \item $\Ind^{-1}(\HH_{[\Res^{-1}(\UU),\Res^{-1}(\TT)]})=\HH_{[\UU,\TT]}$.
\end{enumerate}
In particular, $[\UU,\TT]$ is a wide interval if and only if so is $[\Res^{-1}(\UU),\Res^{-1}(\TT)]$. 
\end{proposition}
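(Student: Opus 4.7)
The plan is to reduce everything to the identities $\HH_{[\UU,\TT]} = \TT \cap \UU^{\perp}$ and $\HH_{[\Res^{-1}(\UU),\Res^{-1}(\TT)]} = \Res^{-1}(\TT) \cap (\Res^{-1}(\UU))^{\perp}$, and then use the formalism developed earlier in the section. For part (1), the key observation is that $\Res^{-1}$ trivially commutes with intersections, so
\[
\Res^{-1}(\HH_{[\UU,\TT]}) = \Res^{-1}(\TT) \cap \Res^{-1}(\UU^{\perp}),
\]
and then I would invoke Lemma \ref{lem:resper} (which requires $\UU$ to be $G$-invariant, as assumed) to rewrite $\Res^{-1}(\UU^{\perp}) = (\Res^{-1}(\UU))^{\perp}$. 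This immediately yields (1).

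For part (2), the strategy is to feed (1) into Proposition \ref{prop:Gchar}, which says $\Ind^{-1}(\Res^{-1}(\CC)) = \CC$ whenever $\CC$ is a $G$-invariant subcategory of $\mod kN$ closed under direct summands. I need to check both hypotheses for $\CC = \HH_{[\UU,\TT]}$. The $G$-invariance is exactly Proposition \ref{prop:2-3} applied to the $G$-invariant pair $\UU \subseteq \TT$. Closure under direct summands follows since $\TT$ is closed under quotients (hence under direct summands) and $\UU^{\perp}$ is closed under direct summands by a standard argument about $\Hom$. Combining with (1) then gives
\[
\Ind^{-1}(\HH_{[\Res^{-1}(\UU),\Res^{-1}(\TT)]}) = \Ind^{-1}(\Res^{-1}(\HH_{[\UU,\TT]})) = \HH_{[\UU,\TT]}.
\]

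For the ``in particular'' statement, I would argue both implications by transporting the wide-subcategory property through the exact functors $\Res$ and $\Ind$. If $\HH_{[\UU,\TT]}$ is wide in $\mod kN$, then since $\Res$ is exact, $\Res^{-1}(\HH_{[\UU,\TT]})$ is closed under kernels, cokernels and extensions in $\mod kG$, hence wide; together with (1) this shows $[\Res^{-1}(\UU), \Res^{-1}(\TT)]$ is a wide interval. Conversely, if $\HH_{[\Res^{-1}(\UU),\Res^{-1}(\TT)]}$ is wide, then since $\Ind$ is exact the same reasoning shows $\Ind^{-1}$ of it is wide, which by (2) is $\HH_{[\UU,\TT]}$.

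I do not anticipate a substantive obstacle: the heart identities are formal, Lemma \ref{lem:resper} handles the only nontrivial commutation (between $\Res^{-1}$ and $(-)^{\perp}$), and Propositions \ref{prop:Gchar} and \ref{prop:2-3} do the rest. The main thing to be careful about is making sure the $G$-invariance hypothesis is available wherever Lemma \ref{lem:resper} or Proposition \ref{prop:Gchar} is invoked, which is precisely why $G$-invariance of $\HH_{[\UU,\TT]}$ (via Proposition \ref{prop:2-3}) is essential in the argument for (2).
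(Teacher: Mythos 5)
Your proposal is correct and follows essentially the same route as the paper: part (1) via Lemma \ref{lem:resper} and compatibility of $\Res^{-1}$ with intersections, part (2) via Proposition \ref{prop:2-3} and Proposition \ref{prop:Gchar}, and the final statement via exactness of $\Res$ and $\Ind$. Your explicit check that $\HH_{[\UU,\TT]}$ is closed under direct summands (needed to invoke Proposition \ref{prop:Gchar}) is a detail the paper leaves implicit, but it is the same argument.
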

\begin{proof}
(1) By Lemma \ref{lem:resper}, we have
\[
\Res^{-1}(\TT)\cap(\Res^{-1}(\UU))^{\perp}=\Res^{-1}(\TT)\cap\Res^{-1}(\UU^{\perp})=\Res^{-1}(\TT\cap\UU^{\perp}).
\]
Thus we have $\Res^{-1}(\HH_{[\UU,\TT]})=\HH_{[\Res^{-1}(\UU),\Res^{-1}(\TT)]}$. 

(2) By Proposition \ref{prop:2-3}, the heart $\HH_{[\UU,\TT]}$ is $G$-invariant. Then the statement follows from (1) and Proposition \ref{prop:Gchar}. 

The latter part follows from that $\Res$ and $\Ind$ are exact functors. 
\end{proof}

\begin{theorem}\label{thm:compati}
Let $M_1$ and $M_2$ be $G$-invariant support $\tau$-tilting $kN$-modules, and $[\Fac M_1, \Fac M_2]$ a wide interval. Then $\Fac (\Ind M_1)$ and $\Fac (\Ind M_2)$ are functorially finite torsion classes, and $[\Fac (\Ind M_1), \Fac (\Ind M_2)]$ is a wide interval. Moreover the wide subcategory $\HH_{[\Fac (\Ind M_1), \Fac (\Ind M_2)]}$ is stable under $k[G/N] \otimes_k -$.
\end{theorem}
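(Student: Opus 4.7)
The plan is to reduce everything to the characterizations established earlier in this appendix, namely Propositions \ref{prop:commute}, \ref{prop:inv}, and \ref{prop:Gchar2}, and to verify the three assertions in sequence.

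First I would check the functorial finiteness. Since $M_i$ is $G$-invariant, $\add M_i$ is $G$-invariant, so by Proposition \ref{prop:Ginv}(2) the torsion class $\Fac M_i = \Fac(\add M_i)$ is $G$-invariant. Then the commutative diagram of Proposition \ref{prop:commute} guarantees that $\Ind M_i$ lies in $(\sttilt kG)^\star$ and that $\Fac(\Ind M_i) = \Res^{-1}(\Fac M_i)$; in particular $\Fac(\Ind M_i)$ is a functorially finite torsion class in $\mod kG$, lying in $(\ftors kG)^\star$.

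Next I would upgrade this to the wide-interval claim. Because $\Fac M_1 \se \Fac M_2$ are $G$-invariant torsion classes in $\mod kN$ and $[\Fac M_1,\Fac M_2]$ is a wide interval by hypothesis, Proposition \ref{prop:inv} yields that
\[
 [\Res^{-1}(\Fac M_1),\Res^{-1}(\Fac M_2)] = [\Fac(\Ind M_1),\Fac(\Ind M_2)]
\]
is again a wide interval, and moreover
\[
 \Res^{-1}\bigl(\HH_{[\Fac M_1,\Fac M_2]}\bigr)=\HH_{[\Fac(\Ind M_1),\Fac(\Ind M_2)]}.
\]

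Finally, for the stability under $k[G/N]\otimes_k -$, write $\HH:=\HH_{[\Fac(\Ind M_1),\Fac(\Ind M_2)]}$. By Proposition \ref{prop:inv}(2) applied to $\UU=\Fac M_1$ and $\TT=\Fac M_2$,
\[
 \Ind^{-1}(\HH)=\HH_{[\Fac M_1,\Fac M_2]}.
\]
Applying $\Res^{-1}$ and combining with part (1) of Proposition \ref{prop:inv},
\[
 \Res^{-1}\bigl(\Ind^{-1}(\HH)\bigr) = \Res^{-1}\bigl(\HH_{[\Fac M_1,\Fac M_2]}\bigr) = \HH.
\]
Since a wide subcategory is closed under cokernels, Proposition \ref{prop:Gchar2} then forces $\HH$ to be stable under $k[G/N]\otimes_k-$. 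I do not expect any real obstacle in this argument, as all of the machinery is already in place; the only subtle point is to make sure that $\Fac M_i$ is genuinely $G$-invariant (so that Proposition \ref{prop:inv} applies), which is why the verification via Proposition \ref{prop:Ginv}(2) should be made explicit at the start.
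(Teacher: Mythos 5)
Your proposal is correct and follows essentially the same route as the paper: Proposition \ref{prop:commute} (together with the theorem of Koshio--Kozakai that makes $\Ind M_i$ support $\tau$-tilting) gives $\Fac(\Ind M_i)=\Res^{-1}(\Fac M_i)\in(\ftors kG)^\star$, and Proposition \ref{prop:inv} gives the wide-interval claim, exactly as in the paper. The only cosmetic difference is the last step, where you deduce stability of the heart from Proposition \ref{prop:Gchar2} via $\HH=\Res^{-1}(\Ind^{-1}(\HH))$, while the paper observes directly that the heart equals $\Res^{-1}(\HH_{[\Fac M_1,\Fac M_2]})$ and invokes Proposition \ref{prop:G-inv-of-resinverse} and Lemma \ref{lem:resper}; these are the same mechanism.
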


\begin{proof}
Let $M_1$ and $M_2$ be $G$-invariant support $\tau$-tilting $kN$-modules, and assume that $[\Fac M_1, \Fac M_2]$ is a wide interval.
By \cite[Theorem 1.3]{KK23-2}, each induced module $\Ind M_i$ is a support $\tau$-tilting $kG$-module and we get a $k[G/N]\otimes_k-$ stable torsion class $\Fac(\Ind M_i)$ by Proposition \ref{prop:commute}. 
Also, we have $\Fac(\Ind M_i) \cong \Res^{-1}(\Fac M_i)$ by Proposition \ref{prop:commute} again.
Therefore we have that $[\Fac(\Ind M_1), \Fac(\Ind M_2)]$ is a wide interval by Proposition \ref{prop:inv} and the assumption that $[\Fac M_1, \Fac M_2]$ is a wide interval.
The remaining statement follows from Proposition \ref{prop:G-inv-of-resinverse} and Lemma \ref{lem:resper}.
\end{proof}

\begin{ack}
The second author is supported by JSPS KAKENHI Grant Number JP22J20611.
\end{ack}

\end{document}